\newtheorem{theorem}{Theorem}[section]
\newtheorem{proposition}[theorem]{Proposition}
\newtheorem{proposition-definition}[theorem]{Proposition-Definition}
\newtheorem{lemma}[theorem]{Lemma}
\newtheorem{remark}[theorem]{Remark}
\theoremstyle{definition}
\newtheorem{definition}[theorem]{Definition}
\theoremstyle{property}
\newcommand{\contraction}[5][1ex]{%
  \mathchoice
    {\contraction@\displaystyle{#2}{#3}{#4}{#5}{#1}}%
    {\contraction@\textstyle{#2}{#3}{#4}{#5}{#1}}%
    {\contraction@\scriptstyle{#2}{#3}{#4}{#5}{#1}}%
    {\contraction@\scriptscriptstyle{#2}{#3}{#4}{#5}{#1}}}%
\newcommand{\contraction@}[6]{%
  \setbox0=\hbox{$#1#2$}%
  \setbox2=\hbox{$#1#3$}%
  \setbox4=\hbox{$#1#4$}%
  \setbox6=\hbox{$#1#5$}%
  \dimen0=\wd2%
  \advance\dimen0 by \wd6%
  \divide\dimen0 by 2%
  \advance\dimen0 by \wd4%
  \vbox{%
    \hbox to 0pt{%
      \kern \wd0%
      \kern 0.5\wd2%
      \contraction@@{\dimen0}{#6}%
      \hss}%
    \vskip 0.5ex% how far above the line starts
    \vskip\ht2}}
\newcommand{\contraction@@}[3][0.05em]{%
% the 1st parameter (explicitely inserted) is the width % of the contraction line
  \hbox{%
    \vrule width #1 height 0pt depth #3%
    \vrule width #2 height 0pt depth #1%
    \vrule width #1 height 0pt depth #3%
    \relax}}
\DeclareFontFamily{OT1}{rsfs}{}
\DeclareFontShape{OT1}{rsfs}{n}{it}{<-> rsfs10}{}
\DeclareMathAlphabet{\curly}{OT1}{rsfs}{n}{it}
\newcommand\mdot{{\scriptscriptstyle\bullet}}
\renewcommand\L{\mathcal L}
\renewcommand\O{\mathcal O}
\newcommand\PP{\mathbb P}
\newcommand\cE{\mathcal E}
\newcommand\F{\mathcal F}
\newcommand\G{\mathcal G}
\newcommand\sfZ{\mathsf Z}
\newcommand\M{\mathcal M}
\newcommand\N{\mathcal N}
\newcommand\C{\mathbb C}
\newcommand\cC{\mathcal C}
\newcommand\cK{\mathcal K}
\newcommand\cQ{\mathcal Q}
\newcommand\R{\mathbb R}
\newcommand\ccR{\mathcal R}
\newcommand\Z{\mathbb Z}
\newcommand\ch{\mathrm{ch}}
\newcommand\Into{\ar@{^(->}[r]<-.3ex>}
\renewcommand\hom{\curly H\!om}
\newcommand\ext{\curly Ext}
\newcommand\Pic{\operatorname{Pic}}
\newcommand\Quot{\operatorname{Quot}}
\newcommand\beq[1]{\begin{equation}\label{#1}}
\newcommand\eeq{\end{equation}}
\newcommand\beqa{\begin{eqnarray*}}
\newcommand\eeqa{\end{eqnarray*}}
\newcommand{\cT}{\mathcal{T}}
\DeclareRobustCommand{\SkipTocEntry}[4]{}
\begin{document}
\title[Stable reflexive sheaves and localization]{Stable reflexive sheaves and localization}
\author[A.~Gholampour and M. Kool]{Amin Gholampour and Martijn Kool}

\maketitle

\begin{abstract}
We study moduli spaces $\N$ of rank 2 stable reflexive sheaves on $\PP^3$. Fixing Chern classes $c_1$, $c_2$, and summing over $c_3$, we consider the generating function $\sfZ^{\mathrm{refl}}(q)$ of Euler characteristics of such moduli spaces. The action of the torus $T$ on $\PP^3$ lifts to $\N$ and we classify all sheaves in $\N^T$. This leads to an explicit expression for $\sfZ^{\mathrm{refl}}(q)$. Since $c_3$ is bounded below and above, $\sfZ^{\mathrm{refl}}(q)$ is a polynomial. We find a simple formula for its leading term when $c_1=-1$. 
%For $c_1=-1$, we show its leading term is $12c_2 q^{c_{2}^{2}}$.

Next, we study moduli spaces of rank 2 stable torsion free sheaves on $\PP^3$ and consider the generating function of Euler characteristics of such moduli spaces. We give an expression for this generating function in terms of $\sfZ^{\mathrm{refl}}(q)$ and Euler characteristics of Quot schemes of certain $T$-equivariant reflexive sheaves, which are studied elsewhere. 
%These Quot schemes and their fixed point loci are studied in a sequel with B.~Young. The components of these fixed point loci are products of $\PP^1$'s and give rise to non-trivial combinatorics. For $c_1=-1$ and $c_2=1$, we obtain $\sfZ(q) = 4(q+q^{-1}) M(q^{-2})^8$, where $M(q)$ is the MacMahon function.
Many techniques of this paper apply to any toric 3-fold. In general, $\sfZ^{\mathrm{refl}}(q)$ depends on the choice of polarization which leads to wall-crossing phenomena. We briefly illustrate this in the case of $\PP^2 \times \PP^1$.
\end{abstract}

\thispagestyle{empty}
\renewcommand\contentsname{\vspace{-8mm}}

\section{Introduction}

In general, vector bundles on a complex smooth projective variety $X$ do not form a compact moduli space. Fixing a polarization $H$, we denote by $\M_{X}^{H}(r,c_{\mdot})$ the quasi-projective moduli space of rank $r$ $\mu$-stable\footnote{See \cite{HL} for the definitions of torsion free and stability.} torsion free sheaves on $X$ with total Chern class $c_\mdot$. If rank and degree are coprime (not required in this paper), then Gieseker and $\mu$-stability coincide and there are no strictly semistable sheaves. In this case the moduli space is compact. Although $\M_{X}^{H}(r,c_{\mdot})$ contains vector bundles as an open subset, this set is in general not dense and can be empty. Besides being interesting in their own right, exciting invariants can be extracted from these moduli spaces. When $X$ is a Calabi-Yau or Fano 3-fold, R.~P.~Thomas showed they carry a perfect obstruction theory \cite{Tho}, which can be used to construct deformation invariants of $X$. 
%Strictly speaking, R.~.P~Thomas requires an effective anti-canonical divisor. So $-K_X$ very ample (or even globally generated) implies this. Note: very ample implies globally generated. I am not sure how $-K_X$ ample fits in between these. A Fano variety is a smooth complete variety with $-K_X$ ample.

The moduli spaces $\M_{X}^{H}(r,c_{\mdot})$ have been heavily studied in the case $r=1$ or $X=S$ is a surface. We refer the reader to \cite[Part II]{HL} for references. In this paper, we are interested in the case $r=2$ and $X=\PP^3$. The action of the dense open torus $T = \C^{*3}$ on $X$ lifts to $\M_{X}^{H}(2,c_1,c_2,c_3)$ and our goal is to study the fixed point locus. Good knowledge of the fixed point locus might allow one to compute the generating function of topological Euler characteristics $e(\cdot)$
\[
\sfZ_{c_1,c_2}(q) = \sum_{c_3} e(\M_{\PP^3}(2,c_1,c_2,c_3)) q^{c_3}.
\]
\indent In the case $r=1$ or $X=S$ is a surface, generating functions of Euler characteristics have very interesting combinatorial and number-theoretic properties. E.g.~for $r=1$ and $X$ toric, they are generating functions of monomial ideals. For $X=S$ a surface they are (quasi)-modular forms in many cases \cite{Got1, Kly2, Yos, VW, Man1, Man2, Man3}. 

Given a coherent sheaf $\F$ on $X$, its double dual is defined by 
$$\F^{**}:=\hom_X(\hom_X(\F,\O_X),\O_X).$$ 
There is a natural map of coherent sheaves $\theta : \F \rightarrow \F^{**}$, which is an injection if and only if $\F$ is torsion free.  A coherent sheaf $\F$ is called \emph{reflexive} if $\theta$ is an isomorphism.  Locally free sheaves (vector bundles) are examples of reflexive sheaves. If $\F$ is a torsion free sheaf, then its double dual $\F^{**}$, which is automatically reflexive, is called the \emph{reflexive hull} of $\F$. A reflexive sheaf is fully determined by its restriction to the complement of any codimension $\geq 2$ closed subset of $X$ \cite[Prop.~1.6]{Har}. For this reason, reflexive sheaves are much easier to understand than general torsion free sheaves.

 In the case $X=S$ is a surface, reflexive sheaves are the same as locally free sheaves and the cokernel of $\F \rightarrow \F^{**}$ is 0-dimensional. Applying the double-dual map at the level of moduli spaces gives a product formula for the generating function \cite[Prop.~3.1]{Got2}
\[
\sum_{c_2} e(\M_{S}^{H}(r,c_1,c_2)) q^{c_2} = \frac{1}{\prod_{k>0}(1-q^k)^{r e(S)}} \sum_{c_2} e(\N_{S}^{H}(r,c_1,c_2)) q^{c_2},
\]
where $\N_{S}^{H}(r,c_1,c_2)$ is the moduli space of rank $r$ $\mu$-stable vector bundles on $S$ with Chern classes $c_1, c_2$. In the case $r=2$ and $X = \PP^3$ the cokernel of $\F \rightarrow \F^{**}$ has dimension $\leq 1$ and there is no simple product formula. Nevertheless, we still propose to study the (in general non-compact!) moduli spaces of reflexive sheaves $\N_{\PP^3}(2,c_1,c_2,c_3)$ and the generating function
\[
\sfZ_{c_1,c_2}^{\mathrm{refl}}(q) = \sum_{c_3} e(\N_{\PP^3}(2,c_1,c_2,c_3)) q^{c_3}.
\]
The goal of this paper is to fully describe the fixed point locus of $\N_{\PP^3}(2,c_1,c_2,c_3)$ and the generating function $\sfZ_{c_1,c_2}^{\mathrm{refl}}(q)$. Note that after tensoring with a line bundle, we can assume without loss of generality that\footnote{The cohomology ring of $\PP^3$ is $H^{2*}(\PP^3,\Z) \cong \Z[h] / (h^4)$. Therefore each $H^{2i}(\PP^3,\Z)$ is generated by $h^i$ and we can view any Chern class $c_i \in H^{2i}(\PP^3,\Z)$ as an integer.} $c_1 \in \{ -1, 0 \}$. 

In this paper, we classify all \emph{$T$-equivariant} rank 2 $\mu$-stable reflexive sheaves on $\PP^3$ (Proposition \ref{class}). We distinguish three types of sheaves: type $(1)$ (generic) and types (2) and (3) (degenerations of type $(1)$). Type $(1)$ will come with continuous moduli (as $T$-equivariant sheaves), whereas types $(2)$ and $(3)$ have no moduli. From the classification, we get an expression for $\sfZ^{\mathrm{refl}}_{c_1, c_2}(q)$.
\begin{theorem} \label{main}
For any  $c_1, c_2$, there are explicit sets $D_1(c_1,c_2), D_2(c_1,c_2) \subset \Z^4$, and $D_3(c_1,c_2) \subset \Z^3$ defined in Section 3.3, such that 
\[
\sfZ_{c_1,c_2}^{\mathrm{refl}}(q) = -\sum_{{\bf{v}} \in D_1(c_1,c_2)} q^{C_{1}({\bf{v}})} + \sum_{{\bf{v}} \in D_2(c_1,c_2)} 6q^{C_{2}({\bf{v}})} + \sum_{{\bf{v}} \in D_3(c_1,c_2)} 4q^{C_{3}({\bf{v}})},
\]
where $C_{i}({\bf{v}})$ are the following cubic forms
\[
C_{1}({\bf{v}}) = \sum_{1 \leq i<j<k \leq 4} v_i v_j v_k, \ C_{2}({\bf{v}}) = (v_1+v_2)v_3 v_4, \ \mathrm{and} \ C_{3}({\bf{v}}) = v_1 v_2 v_3.
\]
\end{theorem}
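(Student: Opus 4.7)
The plan is to apply torus localization for topological Euler characteristics. Since $T \cong (\C^*)^3$ is connected and acts on $\N := \N_{\PP^3}(2, c_1, c_2, c_3)$, a standard argument (e.g.~via a generic one-parameter subgroup and Bia{\polishl}ynicki-Birula) yields $e(\N) = e(\N^T)$. Therefore
\[
G^{\mathrm{refl}}_{c_1, c_2}(q) = \sum_{c_3} e(\N^T) \, q^{c_3},
\]
and it suffices to compute $e(\N^T)$ stratum by stratum. By Proposition \ref{class}, the $T$-fixed locus decomposes into three strata indexed by the three types, each parametrized by combinatorial data $\mathbf{v} \in \Z^4$ whose four entries correspond to the four $T$-fixed points of $\PP^3$ (equivalently, the four maximal cones of the fan). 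The sets $D_i(c_1, c_2)$ of Section 3.3 are defined to record exactly those data producing a genuine $\mu$-stable reflexive sheaf of Type $i$ with the prescribed $c_1$ and $c_2$. The cubic forms $C_i(\mathbf{v})$ then arise by equivariant localization of $c_3$ at the four fixed points, using the local weight description of the equivariant sheaf furnished by the classification; the $c_3$ invariant is a cubic polynomial in these weights, which evaluates to $C_i(\mathbf{v})$.

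For Types $2$ and $3$, the classification asserts the absence of continuous moduli, so each combinatorial datum produces an isolated $T$-fixed point of $\N$. The numerical coefficients $6$ and $4$ are $S_4$-orbit sizes: since $C_2(\mathbf{v}) = (v_1 + v_2)v_3 v_4$ is stabilized only by the subgroup $S_2 \times S_2 \subset S_4$ of order $4$, the $S_4$-orbit of any representative has length $24/4 = 6$, and similarly the $S_3$-stabilizer of $C_3(\mathbf{v}) = v_1 v_2 v_3$ gives an orbit of length $24/6 = 4$. Thus each $\mathbf{v} \in D_2$ stands for $6$ distinct isolated fixed points and each $\mathbf{v} \in D_3$ for $4$, accounting for the corresponding terms in the formula.

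For Type $1$, each $\mathbf{v} \in D_1$ carries a one-parameter moduli of $T$-equivariant sheaves (the ``continuous moduli'' noted in the introduction). Since $C_1$ is fully $S_4$-invariant, the orbit size collapses to $1$, but the equivariant moduli is one-dimensional. I would expect to identify this moduli explicitly as $\PP^1$, for instance via a projectivised $T$-equivariant $\Ext^1$-space of a natural equivariant extension of $\O$ by a $T$-invariant ideal sheaf coming from the classification, and to verify that three distinguished fibres of this $\PP^1$ degenerate into Type $2$ or Type $3$ sheaves whose $c_3$ jumps to the value $C_2$ or $C_3$ of a different combinatorial datum. The Type $1$ contribution at $\mathbf{v}$ to $\N^T$ (at the value $C_1(\mathbf{v})$ of $c_3$) is then $\PP^1$ minus three points, of Euler characteristic $2 - 3 = -1$; the three degenerate sheaves live at different $c_3$ and are already enumerated in the Type $2$ and Type $3$ sums, so no double-counting occurs. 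Summing the three contributions gives the claimed formula.

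The main obstacle is the Type $1$ analysis: producing the explicit one-parameter family of $T$-equivariant Type $1$ sheaves attached to each $\mathbf{v} \in D_1$, identifying its compactification with $\PP^1$, and checking that the three special fibres fall precisely into Type $2$ / Type $3$ at the $c_3$-values dictated by $C_2, C_3$. By comparison the Type $2$ and Type $3$ multiplicities reduce to the $S_4$-orbit counting sketched above, and the cubic forms $C_i$ follow from a routine equivariant Chern character localization on $\PP^3$ applied to the local model furnished by the classification.
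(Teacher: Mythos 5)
Your skeleton---localization to $\N_{\PP^3}(2,c_1,c_2,c_3)^T$, the classification of Proposition \ref{class}, a Chern class computation producing the cubic forms $C_i$, and orbit counting for the coefficients $6$ and $4$---is the same as the paper's. But two steps you treat as given are where the real content lies. The first is the identification of $\N^T$ with a set of toric data: one must know that every fixed sheaf admits a $T$-equivariant structure, unique up to twisting by a character of $T$, and then normalize (the paper sets $u_1=u_2=u_3=0$) to obtain a slice in bijection with $\N^T$; you skip this entirely. (Minor point: the entries of $\mathbf{v}$ are indexed by the four rays/toric divisors of the fan of $\PP^3$, not by the four fixed points or maximal cones.)

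The more serious gap concerns the structure of the fixed components for fixed discrete data $(\mathbf{u},\mathbf{v})$. Proposition \ref{class} does not ``assert the absence of continuous moduli'' for types 2 and 3---it is only a stability criterion---and a priori the remaining continuous parameter is the whole configuration $(p_1,\dots,p_4)\in(\PP^1)^4$, which is four-dimensional, not zero- or one-dimensional. The missing observation is that an element of $\mathrm{GL}(2,\C)$ acting on the common limiting fibre $\C^{\oplus 2}$ carries the flags $\{V^{\rho_i}(\lambda)\}$ to those of an isomorphic equivariant sheaf, so the moduli for fixed $(\mathbf{u},\mathbf{v})$ is the configuration of the lines $p_i$ modulo $\mathrm{PGL}(2,\C)$. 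This one fact gives everything at once: three distinct points of $\PP^1$ form a single $\mathrm{PGL}(2)$-orbit, so types 2 and 3 are isolated reduced points, and four distinct points are classified by their cross-ratio, so each type 1 component is $\PP^1\setminus\{0,1,\infty\}\cong\C^*\setminus\{1\}$, of Euler characteristic $-1$. Your alternative route to the $-1$---a projectivized equivariant $\Ext^1$ of a Serre-type extension, compactified to a $\PP^1$ with three degenerate fibres---is speculative: there is no reason offered that this space is one-dimensional, nor that no further identifications occur, so as written the $-1$ is guessed rather than derived. The Chern class input is fine: the forms $C_i(\mathbf{v})$ follow from Proposition \ref{Chern} by evaluating $p_{ij}$ and $p_{ijk}$ on each type, and fixed-point localization is a workable alternative to the paper's d\'evissage there.
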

The three terms in this formula correspond to the contribution of each of the three types in the classification. For $c_1=-1$ and $c_2=1,2,3$, this gives the following expressions for $\sfZ_{c_1,c_2}^{\mathrm{refl}}(q)$
\[
4 q, \ 24 q^4, \ - 4 q^7 + 36 q^9, \  \ldots
\]
This list suggests $\sfZ_{c_1,c_2}^{\mathrm{refl}}(q)$ is always a \emph{polynomial}. This is indeed the case for any generating function of rank 2 $\mu$-stable reflexive sheaves on any polarized smooth projective 3-fold $(X,P)$. More precisely, the third Chern class $c_3 \in H^6(X,\Z) \cong \Z$ of such a sheaf is bounded below by zero and is bounded above\footnote{This is proved in Proposition \ref{bounds}. The argument for the upper bound was suggested to us by R.~P.~Thomas.} by some constant depending on $X$, $H$, $c_1$, and $c_2$. However, finding an \emph{explicit} value for the upper bound on any given $(X,H)$ is a much harder problem \cite{Har, Ver1, Ver2}. On $X=\PP^3$, R.~Hartshorne proves the following.
\begin{theorem} [Hartshorne] \label{Hartshorne}
Let $\F$ be a rank 2 $\mu$-stable reflexive sheaf on $X = \PP^3$ with Chern classes $c_1, c_2, c_3$. Then the following holds:
\begin{enumerate}
\item[$\mathrm{(i)}$] $c_3 = c_1 c_2 \ \mathrm{mod} \ 2$,
\item[$\mathrm{(ii)}$] if $c_1 \in \{-1,0\}$, then $c_2 > 0$,
\item[$\mathrm{(iii)}$] if $c_1=-1$, then $0 \leq c_3 \leq c_{2}^{2}$ and if $c_1=0$, then $0 \leq c_3 \leq c_{2}^{2} - c_2 +2$. Both upper bounds are sharp.
\end{enumerate}
\end{theorem}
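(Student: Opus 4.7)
The plan is to reduce everything to statements about curves in $\PP^3$ via the Serre construction, combined with restriction arguments to planes. Let $n$ be the minimal integer with $h^0(\F(n)) > 0$ and pick a general section $s$. Since $\F$ is reflexive of rank $2$, the zero locus $Y$ of $s$ has pure codimension $2$ (the non-locally-free locus of $\F$, where $s$ cannot vanish transversally, contributes embedded points), yielding a Serre exact sequence
\[
0 \to \O \to \F(n) \to \I_Y(c_1 + 2n) \to 0.
\]
A computation of total Chern classes gives $\deg(Y) = c_2 + n c_1 + n^2$ together with an explicit formula expressing $c_3(\F)$ as an integer combination of $2 p_a(Y) - 2$, $\deg(Y)$, $c_1$ and $n$. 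Since $p_a(Y) \in \Z$, this formula immediately yields $c_3 \equiv c_1 c_2 \pmod{2}$, proving (i).

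For (ii), $\mu$-stability together with $c_1 \in \{-1, 0\}$ forces $H^0(\F) = 0$, so $n \geq 1$ in the construction above. By Mehta--Ramanathan, the restriction $\F|_H$ to a sufficiently general plane $H \subset \PP^3$ remains $\mu$-semistable, and Bogomolov's inequality on $\PP^2$ gives $c_1(\F|_H)^2 - 4 c_2(\F|_H) \leq 0$, yielding $c_2 \geq 1$ for $c_1 = -1$ and $c_2 \geq 0$ for $c_1 = 0$. The remaining borderline case $c_1 = c_2 = 0$ would make the Jordan--H\"older factors of $\F|_H$ all trivial, forcing $\F \cong \O^{\oplus 2}$ and contradicting $\mu$-stability.

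For (iii), the lower bound $c_3 \geq 0$ follows from the identification of $c_3(\F)$ with (a positive multiple of) the length of the locus where $\F$ fails to be locally free, with equality exactly when $\F$ is locally free. The upper bound is the crux of the theorem. The strategy is to analyze the Serre curve $Y$ at the minimal twist: $\mu$-stability bounds $n$ in terms of $c_1$ and $c_2$, and Castelnuovo-type bounds on the arithmetic genus of a space curve of degree $\deg(Y) = c_2 + n c_1 + n^2$ translate, via the $c_3$ formula above, into an upper bound on $c_3$. The main obstacle is the careful case-by-case analysis needed to attain the sharp constants $c_2^2$ for $c_1 = -1$ and $c_2^2 - c_2 + 2$ for $c_1 = 0$; sharpness is exhibited by constructing extremal reflexive sheaves as Serre extensions of ideal sheaves of extremal curves, e.g.\ plane curves augmented by collections of isolated non-locally-free points.
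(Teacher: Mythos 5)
This theorem is not proved in the paper: it is quoted from Hartshorne's \emph{Stable reflexive sheaves}, and what the paper actually proves is only the $T$-equivariant analogue (Proposition \ref{equivHartshorne}), by a completely different route --- classifying $T$-fixed sheaves by toric data $({\bf{u}},{\bf{v}},{\bf{p}})$, writing $c_2,c_3$ as explicit polynomials in the $v_i$ via Proposition \ref{Chern}, and then verifying positivity of a quartic such as $f(\xi)=\frac{1}{16}(\sum_i\xi_i(\xi_j+\xi_k+\xi_l-\xi_i))^2+\cdots$ on the polyhedral region cut out by the stability inequalities. Your route is the classical one (essentially Hartshorne's original argument): Serre construction, restriction to planes plus Bogomolov for $c_2>0$, and $c_3=h^0(\ext^1(\F,\O))\geq 0$ for the lower bound (this last point the paper does prove, in Proposition \ref{bounds}(i), in the same way you indicate). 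The trade-off is clear: your approach proves the statement for \emph{all} stable reflexive sheaves, not just torus-fixed ones, while the paper's toric computation is elementary and mechanical but only reaches the fixed locus. Two small imprecisions: Mehta--Ramanathan gives semistable restriction to general hypersurfaces of sufficiently \emph{large} degree, so for planes you should invoke Maruyama's restriction theorem (rank $<\dim X$) or Grauert--M\"ulich instead; and in Hartshorne's correspondence the zero scheme of the section is a Cohen--Macaulay curve with no embedded points --- the non-locally-free points of $\F$ appear as points where $Y$ fails to be l.c.i., not as embedded points. Neither affects the Chern class bookkeeping.

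The genuine gap is in part (iii), and you have located it yourself: the sharp upper bounds are asserted to follow from ``Castelnuovo-type bounds'' but are not derived. This is where all the content of the theorem lives, and a generic invocation of Castelnuovo will not produce $c_2^2$ or $c_2^2-c_2+2$. The raw Castelnuovo bound $p_a\leq\frac{1}{2}(d-1)(d-2)$ for a space curve of degree $d=c_2+nc_1+n^2$, fed into $c_3=2p_a(Y)-2-\deg(Y)(c_1+2n-4)$, grows like $d^2\sim n^4$ unless $n$ is controlled, and even with the stability bound on the minimal twist $n$ one must use the stronger genus bound for curves \emph{not lying on surfaces of low degree} (the minimality of $n$ forces $Y$ off all surfaces of degree $<n+c_1+\text{const}$), together with a case analysis over the possible values of $n$. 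Without carrying out that interplay the constants $c_2^2$ and $c_2^2-c_2+2$ do not emerge, so as written the proposal establishes (i), (ii) and the lower bound of (iii), but only describes a plan for the upper bound. (Sharpness, on the other hand, is cheap: the paper's explicit $T$-equivariant sheaves in Proposition \ref{equivHartshorne}(a)--(c) already attain both bounds, so you need not construct extremal Serre curves by hand.)
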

We use our classification to reprove Harthorne's theorem in the $T$-equivariant case (Proposition \ref{equivHartshorne}). 
%Strictly speaking: we show (i), (ii), the bounds in (iii), and we show that in the case $c_2=-1$ the upper bound is attained by certain $T$-equivariant rank 2 $\mu$-stable reflexive sheaves.
This gives a direct proof of polynomiality of $\sfZ_{c_1,c_2}^{\mathrm{refl}}(q)$. In addition, we classify all $T$-equivariant rank 2 $\mu$-stable reflexive sheaves with $c_2$ equal to the upper bound. This leads to the following formula
\begin{equation} \label{upperEuler} 
e(\N_{\PP^3}(2,-1,c_2,c_{2}^{2})) = \left\{ \begin{array}{cc} 4 & \mathrm{if} \ c_2=1 \\ 12 c_2 & \mathrm{if} \ c_2>1. \end{array} \right.
\end{equation}

Next, we show how to modify the generating function of Theorem \ref{main} to include torsion free sheaves (Proposition \ref{torsionfree}). To get explicit formulae for the full generating function $\sfZ_{c_1,c_2}(q)$ for torsion free sheaves, one has to compute Euler characteristics of Quot schemes of 0 or 1-dimensional cokernels of certain $T$-equivariant rank 2 $\mu$-stable reflexive sheaves. The components of the fixed loci of these Quot schemes are products of $\PP^1$'s similar to the case of stable pairs on toric 3-folds studied by R.~Pandharipande and R.~P.~Thomas \cite{PT2}. 
%This leads to non-trivial combinatorics involving the double dimer model. 
This is the topic of another paper \cite{GKY}, which is a joint work with B.~Young.  For low values of $c_2$, we get closed formulae involving the MacMahon function. An example is given by the following theorem. 
\begin{theorem} \cite{GKY} \label{thmGKY}
For $c_1=-1$ and $c_2=1$, $\sfZ_{c_1,c_2}(q)$ is equal to 
\[
4(q + q^{-1}) M(q^{-2})^8,
\] 
where $M(q) = \prod_{k>0} 1 / (1-q^k)^k$ is the MacMahon function.
\end{theorem}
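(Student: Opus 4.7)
The plan is to reduce the statement to Euler characteristic computations on Quot schemes of the four $T$-fixed rank 2 $\mu$-stable reflexive sheaves with $c_1=-1$ and $c_2=1$. By Theorem \ref{Hartshorne}(i) and (iii), any such reflexive sheaf must have $c_3=1$, and by the specialization of Theorem \ref{main} together with Proposition \ref{class}, there are exactly four such sheaves up to isomorphism, described explicitly as $T$-equivariant reflexive sheaves. Proposition \ref{torsionfree} then organizes
\[
G_{-1,1}(q)\;=\;\sum_{[\G]\in \N^T_{\PP^3}(2,-1,1,1)} q^{c_3(\G)} \sum_{Q} e\bigl(\Quot(\G,Q)^T\bigr)\,q^{-c_3(Q)},
\]
where the inner sum runs over $T$-equivariant zero- or one-dimensional quotient sheaves $Q$ of $\G$, parameterized by the appropriate Quot scheme.

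Next I would exploit the Weyl group $S_4$ action on $\PP^3$, which permutes the four $T$-fixed reflexive sheaves in a single orbit, reducing the outer sum to $4\,q\,Z(q)$ for a single universal series $Z(q)$. The theorem is then equivalent to the identity $qZ(q) = (1+q)M(q^{-1})^8$. To evaluate $Z(q)$, I would apply Atiyah--Bott localization on $\Quot(\G,Q)$: the $T$-fixed locus decomposes according to how $Q$ distributes over the four $T$-fixed points and six $T$-invariant lines of $\PP^3$. At each $T$-fixed point of $\PP^3$ the local structure of $\G$ splits as a direct sum of two rank-1 modules, and each summand independently counts $T$-fixed $0$-dimensional quotients via its own 3D-partition generating function $M(q^{-1})$, giving the vertex factor $M(q^{-1})^{2 e(\PP^3)}=M(q^{-1})^{8}$.

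The edge and one-dimensional contributions form the technical heart of the argument: the $T$-fixed components of $\Quot(\G,Q)$ along the $T$-invariant lines of $\PP^3$ are nontrivial products of $\PP^1$'s whose Euler characteristics are governed by a double dimer model on the toric 1-skeleton of $\PP^3$, in the spirit of the stable pairs analysis of Pandharipande--Thomas \cite{PT2}. The main obstacle is the rigorous combinatorial identification of this double-dimer generating function with the factor $(1+q)/q$ that is still needed to complete the formula; this is precisely the content of the sequel \cite{GKY}. Granted that combinatorial input, multiplying by the factor $4$ from the reflexive classification yields $G_{-1,1}(q)=4(1+q)M(q^{-1})^8$.
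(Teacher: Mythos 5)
Your overall reduction is the same as the paper's: the paper also does not prove this theorem here, but derives it from the structure formula of Proposition \ref{torsionfree} (specialized to the coefficient of $p^1$, where $G^{\mathrm{refl}}_{-1,1}(q)=4q$ identifies four type-3 fixed sheaves, each with $\mathbf{v}$ a permutation of $(1,1,1,0)$ and $c_3=1$) together with a closed expression for $Q_{3,\mathbf{v},0}(q)$ whose combinatorial computation is deferred to \cite{GKY}. So as a reduction-plus-citation your proposal matches the paper.

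However, your sketch of where the factors $M(q^{-1})^8$ and $(1+q)$ come from contains a concrete error. A type-3 sheaf with $\mathbf{v}=(1,1,1,0)$ and $p_1,p_2,p_3$ mutually distinct is \emph{not} locally free: $c_3=1>0$, and by Proposition \ref{bounds}(i) it fails to be locally free at exactly one point, namely the $T$-fixed point of $\sigma_1$ where the local module is the triple intersection $V^{\rho_1}(\lambda_1)\cap V^{\rho_2}(\lambda_2)\cap V^{\rho_3}(\lambda_3)$ of three flags with distinct one-dimensional steps. That module does not split as a sum of two rank-1 modules (at the other three vertices it does, because one of the relevant $v_i$ is $0$). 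So your claim that ``at each $T$-fixed point the local structure of $\G$ splits'' is false at precisely the vertex responsible for the correction: taking it at face value would give $4q\,M(q^{-1})^8$ with no $(1+q)$. Relatedly, since $c_2$ is held fixed at $1$, only the $p^0$-part $Q_{3,\mathbf{v},0}(q)$ enters, i.e.\ only \emph{zero-dimensional} quotients; there are no one-dimensional cokernels, hence no edge/double-dimer contributions along $T$-invariant lines in this case (those govern the higher $p$-coefficients). The $(1+q)$ is a local vertex correction at the single non-locally-free point, not an edge term. (The $T$-fixed locus of the Quot scheme can still contain $\PP^1$-components even for zero-dimensional quotients, when two weight spaces of the local rank-2 module coincide; that is the phenomenon Remark \ref{indep} alludes to, and it is part of what \cite{GKY} has to control.)
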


Many techniques of this paper extend to arbitrary smooth toric 3-folds. In the last section, we discuss some of the new features and complications arising in the general case. One such feature is dependence of the generating function on choice of polarization. This leads to wall-crossing phenomena. We illustrate this in the case $X = \PP^2 \times \PP^1$. \\

\noindent \textbf{Notation.} Whenever we write ``for all/there exist $\{i,j,k,l\} = \{1,2,3,4\}$'' we mean ``for all/there exist $i \in \{1,2,3,4\}$, $j \in \{1,2,3,4\} \setminus \{ i \}$, $k \in \{1,2,3,4\} \setminus \{i,j\}$, and $l \in \{1,2,3,4\} \setminus \{i,j,k\}$''. \\
 
\noindent \textbf{Acknowledgments.} We would like to thank R.~P.~Thomas, P.~Vermeire, and B.~Young for very helpful discussions. A.G.~was partially supported by NSF grant DMS-1406788. M.K.~was supported by EPSRC grant EP/G06170X/1, ``Applied derived categories''.

\section{Equivariant sheaves on toric varieties}

This section is a brief exposition of the main results of \cite{Kly1, Kly2, Per, Koo}. We review Klyacho's and Perling's description of $T$-equivariant coherent, torsion free, and reflexive sheaves on toric varieties.

Let $X$ be a smooth toric variety of dimension $d$ with torus $T$. Let $M = X(T)$ be the character group of $T$ (written additively) and denote its dual by $N$. Denote the natural pairing by $\langle \cdot, \cdot \rangle  : M \times N \rightarrow \Z$. Then $N$ is a rank $d$ lattice containing a fan\footnote{We always assume $\Delta$ contains cones of dimension $d$.} $\Delta$ and the data $(N,\Delta)$ completely describes $X$. We refer to Fulton's book \cite{Ful} for the general theory. We recall that there is a bijection between the cones $\sigma \in \Delta$ and the $T$-invariant affine open subsets $U_\sigma \subset X$. \\

\noindent \emph{The affine case}. Suppose $X = U_\sigma$. Let $S_{\sigma} = \{ m \in M \ | \ \langle m , \sigma \rangle \geq 0 \}$. This semi-group gives rise to an algebra $\C[S_\sigma]$, which is exactly the coordinate ring of $U_\sigma$. Therefore, quasi-coherent sheaves on $U_\sigma$ are the same as $\C[S_\sigma]$-modules. More precisely, the global section function gives an equivalence of categories
\[
H^0(\cdot) : \mathrm{Qco}(U_\sigma) \rightarrow \C[S_\sigma]\textrm{-}\mathrm{Mod}.
\]
Under this equivalence, coherent sheaves correspond to finitely generated modules. It will not come as a surprise that this equivalence can be extended to an equivalence between $T$-equivariant quasi-coherent sheaves and $\C[S_\sigma]$-modules with regular $T$-action. The map goes as follows. For a $T$-equivariant quasi-coherent sheaf $(\F,\Phi)$ on $U_\sigma$, use the $T$-equivariant structure $\Phi$ to define a regular $T$-action on $H^0(\F)$. Since $T$ is diagonalizable, a $T$-action on $H^0(\F)$ is equivalent to a decomposition of $H^0(\F)$ into weight spaces
\[
H^0(\F) = \bigoplus_{m \in M} H^0(\F)_m.
\] 
Therefore $T$-equivariant quasi-coherent sheaves on $U_\sigma$ are nothing but $M$-graded $\C[S_\sigma]$-modules, i.e.~there exists an equivalence of categories
\[
H^0(\cdot) : \mathrm{Qco}^T(U_\sigma) \rightarrow \C[S_\sigma]\textrm{-}\mathrm{Mod}^{M\textrm{-graded}}.
\]
See \cite{Kan, Per} for details. \\

\noindent \emph{Repackaging in terms of $\sigma$-families}. Following Perling \cite{Per}, we write the data of an $M$-graded $\C[S_\sigma]$-module in a more explicit way. 
\begin{definition}[Perling]
For each $m,m' \in M$ we write $m \leq_\sigma m'$ when $m' - m \in S_\sigma$. A $\sigma$-family $\hat{F}$ consists of the following data: a collection of complex vector spaces $\{F_m\}_{m \in M}$ and linear maps $\{\chi_{m,m'} : F_m \rightarrow F_{m'}\}_{m \leq_\sigma m'}$ such that:
\begin{enumerate}
\item[(i)] $\chi_{m,m} = \mathrm{id}_{F_m}$,
\item[(ii)] $\chi_{m',m^{\prime \prime}} \circ \chi_{m,m'} = \chi_{m, m^{\prime \prime}}$ for all $m \leq_\sigma m' \leq_\sigma m''$.
\end{enumerate}
A morphism between $\sigma$-families $\hat{F}, \hat{G}$ is a collection $\hat{\phi}$ of linear maps $\{\phi_m : F_m \rightarrow G_m\}_{m \in M}$ commuting with the $\chi$'s. \hfill 
\end{definition}
An $M$-graded module $F =\bigoplus_{m \in M} F_m$ clearly gives rise to a $\sigma$-family. We take $\{F_m\}_{m \in M}$ to be the collection of weight spaces. Moreover, for each $m \leq_\sigma m'$ we have $m' - m \in S_{\sigma} \subset M$, so multiplication by the character $m' - m$ gives a linear map $F_m \rightarrow F_{m'}$. This gives an equivalence of categories \cite[Prop.~5.5]{Per}
\[
\C[S_\sigma]\textrm{-}\mathrm{Mod}^{M\textrm{-graded}} \rightarrow \sigma\textrm{-Families}.
\]
Suppose $\sigma$ is a cone of maximal dimension $d$. Choose an order of its rays $(\rho_1, \ldots, \rho_d)$ and choose a primitive generator $n_i$ for each ray $\rho_i$. By smoothness of $U_\sigma$, this gives a basis $(n_1, \ldots, n_d)$ of the lattice $N$. Denote the dual basis by $(m_1, \ldots, m_d)$. This choice induces an isomorphism $U_\sigma \cong \C^d$. Let $\hat{F}$ be a $\sigma$-family. Writing each $m \in M$ as $m = \sum_i \lambda_i m_i$, we define
\[
F(\lambda_1, \ldots, \lambda_d) := F_m.
\]
Moreover, multiplication by $\chi_{m, m+m_i}$ gives linear maps
\[
\chi_i(\lambda_1, \ldots, \lambda_d):= \chi_{m, m+m_i} : F(\lambda_1, \ldots, \lambda_{d}) \rightarrow  F(\lambda_1, \ldots, \lambda_{i-1},\lambda_i+1,\lambda_{i+1}, \ldots ,\lambda_d)
\]
satisfying the obvious commutativity properties. We note some important facts.
\begin{enumerate}
\item[(i)] Let $\F$ be a $T$-equivariant quasi-coherent sheaf with $\sigma$-family $\hat{F}$. Then $\F$ is coherent if  only if $\hat{F}$ has finitely many homogeneous generators. Such $\sigma$-families are called finite \cite[Def.~5.10]{Per}. 
\item[(ii)] Let $\F$ be a $T$-equivariant coherent sheaf with $\sigma$-family $\hat{F}$. Then $\F$ is torsion free if  only if all maps $\{\chi_{m,m'}\}_{m \leq_\sigma m'}$ are injective. This can be seen by noting that a non-trivial kernel gives rise to a lower dimensional $T$-equivariant subsheaf of $\F$ (e.g.~see \cite[Prop.~2.8]{Koo}).
\end{enumerate}

\noindent \emph{Equivariant torsion free sheaves}. Let $\F$ be a $T$-equivariant coherent sheaf on $X$. Let $\{\sigma_1, \ldots, \sigma_e\}$ be the cones of maximal dimension. Note that $e = e(X)$ is the number of $T$-fixed points of $X$, which is equal to the Euler characteristic of $X$. The subsets $U_{\sigma_i} \cong \C^d$ provide a $T$-invariant affine open cover of $X$ and the restrictions $\F|_{U_{\sigma_i}}$ give us a collection of finite $\sigma$-families $\{\hat{F}^{\sigma_i}\}_{i =1, \ldots, e}$. Conversely, suppose we are given \emph{any} collection of finite $\sigma$-families $\{\hat{F}^{\sigma_i}\}_{i =1, \ldots, e}$. When do these $\sigma$-families glue to a $T$-equivariant coherent sheaf on $X$? In this paper, we are only interested in the torsion free sheaves, so we describe the answer for such sheaves only.
%\footnote{For gluing conditions for general $T$-equivariant quasi-coherent sheaves see \cite[Sect.~5.2]{Per}.}
As mentioned above, in the torsion free case all the maps $\chi_{m,m'}^{\sigma_i}$ between the weight spaces are injective. We can assume all these maps are actually inclusions\footnote{The precise statement is this: the category of $T$-equivariant torsion free sheaves on $U_{\sigma_i}$ is equivalent to the category of finite $\sigma_i$-families with all maps $\chi_{m,m'}^{\sigma_i}$ injective, which in turn is equivalent to its full subcategory of finite $\sigma_i$-families with all maps $\chi_{m,m'}^{\sigma_i}$ inclusions.}. 

We describe the gluing conditions. For each $i=1, \ldots, e$, let $(\rho^{(i)}_1, \ldots, \rho^{(i)}_d)$ be an ordering of rays of $\sigma_i$. Fix any two $i,j$, then the intersection $\sigma_i \cap \sigma_j$ is a cone of some dimension $p$. Assume w.l.o.g.~that $\sigma_i \cap \sigma_j$ is spanned by the first $p$ rays among $(\rho^{(i)}_1, \ldots, \rho^{(i)}_d)$ and $(\rho^{(j)}_1, \ldots, \rho^{(j)}_d)$. Then the gluing conditions are
\begin{equation} \label{glue}
F^{\sigma_i}(\lambda_1, \ldots, \lambda_p, \infty, \ldots, \infty) = F^{\sigma_j}(\lambda_1, \ldots, \lambda_p, \infty, \ldots, \infty), \forall \ \lambda_1, \ldots, \lambda_p \in \Z.
\end{equation}
This needs some explanation. For fixed $i$ and $\lambda_1, \ldots, \lambda_p \in \Z$, consider 
\[
\{F^{\sigma_i}(\lambda_1, \ldots, \lambda_p, \mu_{p+1}, \ldots, \mu_d)\}_{\mu_{p+1}, \ldots, \mu_{d} \in \Z}.
\]
Since the $\sigma$-family $\hat{F}^{\sigma_i}$ is finite, these vector spaces stabilize for sufficiently large $\mu_{p+1}, \ldots, \mu_{d}$ and we denote the limit by $F^{\sigma_i}(\lambda_1, \ldots, \lambda_p, \infty, \ldots, \infty)$. Moreover, the vector spaces $F^{\sigma_i}(\lambda_1, \ldots, \lambda_d)$ form a multi-filtration of some limiting finite dimensional vector space $F^{\sigma_i}(\infty, \ldots, \infty)$ of dimension $\mathrm{rk}(\F)$. The idea behind the gluing conditions (\ref{glue}) is the following: the left hand side of (\ref{glue}) is the $\sigma$-family $\hat{F}^{\sigma_i}$ restricted to $U_{\sigma_i} \cap U_{\sigma_{j}}$ and the right hand side is the $\sigma$-family $\hat{F}^{\sigma_j}$ restricted to $U_{\sigma_i} \cap U_{\sigma_{j}}$. This description of $T$-equivariant torsion free sheaves is originally due to Klyachko \cite{Kly2}. We summarize:
\begin{theorem}[Klyachko] \label{Kly}
Let $X$ be a smooth toric variety described by a fan $\Delta$ in a lattice $N$ of dimension $d$. Let $\{\sigma_1, \ldots, \sigma_e\}$ be the cones of maximal dimension. For each $i=1, \ldots, e$, let $(\rho^{(i)}_1, \ldots, \rho^{(i)}_d)$ be an ordering of rays of $\sigma_i$. The category of $T$-equivariant torsion free sheaves on $X$ is equivalent to a category $\cT$ which can be described as follows. The objects of $\cT$ are collections of finite $\sigma$-families $\{\hat{F}^{\sigma_i} \}_{i = 1, \ldots, e}$, with all maps $\chi_{m,m'}^{\sigma_i}$ inclusions, satisfying the following gluing conditions. For any two $i,j$, $\sigma_i \cap \sigma_j$ is a cone of some dimension $p$. Assume w.l.o.g.~that $\sigma_i \cap \sigma_j$ is spanned by the first $p$ rays among both $(\rho^{(i)}_1, \ldots, \rho^{(i)}_d)$ and $(\rho^{(j)}_1, \ldots, \rho^{(j)}_d)$.  Then $\hat{F}^{\sigma_i}$, $\hat{F}^{\sigma_j}$ satisfy\footnote{It should be clear how the gluing conditions read when the rays of $\sigma_i \cap \sigma_j$ do not necessarily correspond to the first $p$ rays of $\sigma_i$ and $\sigma_j$.} (\ref{glue}). The maps of $\cT$ are collections of maps of $\sigma$-families $\{\hat{\phi}^{\sigma_i} : \hat{F}^{\sigma_i} \rightarrow \hat{G}^{\sigma_i} \}_{i = 1, \ldots, e}$ such that for each $i,j$ as above$^7$ 
\[
\phi^{\sigma_i}(\lambda_1, \ldots, \lambda_p, \infty, \ldots, \infty) = \phi^{\sigma_j}(\lambda_1, \ldots, \lambda_p, \infty, \ldots, \infty), \ \forall \ \lambda_1, \ldots, \lambda_p \in \Z.
\]
\end{theorem}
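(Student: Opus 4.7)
The plan is to build on the affine equivalences already recalled in the section and show that they glue correctly. On each maximal cone $\sigma_i$ the paper has established a chain of equivalences between $T$-equivariant quasi-coherent sheaves on $U_{\sigma_i}$, $M$-graded $\C[S_{\sigma_i}]$-modules, and $\sigma_i$-families, together with the facts that coherent sheaves correspond to finite $\sigma_i$-families and torsion free coherent sheaves to those with all $\chi^{\sigma_i}_{m,m'}$ injective. So to a $T$-equivariant torsion free sheaf $\F$ on $X$ I first associate the collection $\{\hat{F}^{\sigma_i}\}_{i=1,\ldots,e}$ arising from $\F|_{U_{\sigma_i}}$. By the equivalence between finite $\sigma$-families with injective structure maps and those with inclusion structure maps (recalled just before the theorem), I may replace each $\hat{F}^{\sigma_i}$ by an isomorphic model in which every $\chi^{\sigma_i}_{m,m'}$ is an honest inclusion of subspaces of a common limiting vector space $F^{\sigma_i}(\infty,\ldots,\infty)$ of dimension $\rk(\F)$.

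Next I would derive the gluing conditions~(\ref{glue}). Fix $i,j$ and let $\tau=\sigma_i\cap\sigma_j$, a cone of dimension $p$ spanned, up to reordering, by the first $p$ rays of each $\sigma_i$ and $\sigma_j$. Then $U_{\sigma_i}\cap U_{\sigma_j}=U_\tau$, and the inclusion $U_\tau\hookrightarrow U_{\sigma_i}$ corresponds on rings to $\C[S_{\sigma_i}]\hookrightarrow \C[S_\tau]$, obtained by inverting the characters dual to the rays of $\sigma_i$ outside $\tau$. Restriction of a $T$-equivariant sheaf to $U_\tau$ therefore corresponds on $\sigma$-families to localizing in these directions: the weight space at $m=\sum_k\lambda_k m^{(i)}_k$ of the $\tau$-family of $\F|_{U_\tau}$ is the colimit of $F^{\sigma_i}(\lambda_1,\ldots,\lambda_p,\mu_{p+1},\ldots,\mu_d)$ as $\mu_{p+1},\ldots,\mu_d\to\infty$. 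Finiteness makes this colimit stabilize, and since the $\chi$'s are inclusions it is literally a union, which is by definition $F^{\sigma_i}(\lambda_1,\ldots,\lambda_p,\infty,\ldots,\infty)$. Performing the same computation with $j$ in place of $i$ and equating the resulting descriptions of the $\tau$-family of $\F|_{U_\tau}$ yields~(\ref{glue}). Conversely, any collection $\{\hat{F}^{\sigma_i}\}$ satisfying~(\ref{glue}) produces $T$-equivariant torsion free sheaves on the covering $\{U_{\sigma_i}\}$ whose restrictions to every double overlap agree canonically, so the cocycle condition on triple overlaps is automatic and standard sheaf gluing assembles a global $T$-equivariant torsion free sheaf on $X$.

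The argument for morphisms is parallel: a $T$-equivariant map $\phi:\F\to\G$ restricts to a collection $\{\hat{\phi}^{\sigma_i}\}$ of $\sigma$-family morphisms, and the same localization calculation shows that the two restrictions to $U_\tau$ coincide precisely when the limits $\phi^{\sigma_i}(\lambda_1,\ldots,\lambda_p,\infty,\ldots,\infty)$ and $\phi^{\sigma_j}(\lambda_1,\ldots,\lambda_p,\infty,\ldots,\infty)$ agree; conversely, any compatible family of $\sigma$-family morphisms glues to a global equivariant map. The main technical step, and the one I expect to require the most care, is the identification of the restriction functor $(-)|_{U_\tau}$ with localization of $\sigma$-families and, in particular, of the stabilized colimit with the symbolic limit $F^{\sigma_i}(\lambda_1,\ldots,\lambda_p,\infty,\ldots,\infty)$. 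Once that identification is in place, every remaining verification is formal and reproduces Klyachko's original argument \cite{Kly2} in the formulation of Perling \cite{Per}.
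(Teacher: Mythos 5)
Your proposal is correct and follows essentially the same route the paper itself indicates: the affine equivalence with $\sigma$-families on each $U_{\sigma_i}$, plus the identification of restriction to $U_{\sigma_i}\cap U_{\sigma_j}=U_{\sigma_i\cap\sigma_j}$ with localization, which in the $\sigma$-family picture becomes the stabilized limit $F^{\sigma_i}(\lambda_1,\ldots,\lambda_p,\infty,\ldots,\infty)$ and hence yields the gluing conditions (\ref{glue}). The paper does not give a formal proof but sketches exactly this mechanism before deferring to Klyachko and Perling, so your write-up is a faithful expansion of the intended argument.
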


At first glance the description in this theorem does not appear coordinate independent.  However, the \emph{only} choice we made is an ordering of the rays of each cone $\sigma_i$ of maximal dimension. For an extension of this theorem to $T$-equivariant pure sheaves of any dimension, see \cite[Sect.~2]{Koo}. \\

\noindent \emph{Equivariant reflexive sheaves}. Clearly, $T$-equivariant reflexive sheaves on $X$ are $T$-equivariant torsion free, but they have an even simpler description. The reason is that reflexive sheaves on $X$ are fully determined by their behaviour off any codimension $\geq 2$ closed subset of $X$ \cite[Prop.~1.6]{Har}. In particular, a reflexive sheaf on a $T$-invariant affine open subset $U_{\sigma_i} \cong \C^d$ is determined by its behaviour on the complement of the union of all codimension 2 coordinate hyperplanes, i.e.~
\[
(\C \times \C^* \times \cdots \times \C^*) \cup (\C^* \times \C \times \C^* \times \cdots \times \C^*) \cup \cdots \cup (\C^* \times \cdots \times \C^* \times \C).
\]
The restrictions to the components of this union are easy to describe:

Let $\Delta(1)$ be the collection of rays of the fan $\Delta$ of $X$. We introduce a category $\ccR$. Its objects are collections of finite-dimensional complex vector spaces $\{V^\rho(\lambda)\}_{\rho \in \Delta(1), \lambda \in \Z}$ which form flags
\[
\cdots \subset V^\rho(\lambda-1) \subset V^\rho(\lambda) \subset V^\rho(\lambda+1) \subset \cdots.
\] 
We require these flags to satisfy $V^\rho(\lambda) = 0$ for $\lambda \ll 0$ and $V^\rho(\lambda) = V^\rho(\lambda+1)$ for $\lambda \gg 0$. We denote the limiting vector space by $V^\rho(\infty)$. The maps in the category $\ccR$ are the obvious ones: linear maps between the limiting vector spaces preserving the flags. There is a natural fully faithful functor $\ccR \rightarrow \cT$ defined as follows. As before, denote the cones of $\Delta$ of maximal dimension by $\sigma_1, \ldots, \sigma_e$. For each $i=1,\ldots, e$, let $(\rho^{(i)}_1, \ldots, \rho^{(i)}_d)$ be an ordering of rays of $\sigma_i$. Then we map  $\{V^\rho(\lambda)\}_{\rho \in \Delta(1), \lambda \in \Z}$ to the following collection of finite $\sigma$-families
\begin{equation*} 
V^{\sigma_i}(\lambda_1, \ldots, \lambda_d) := V^{\rho^{(i)}_{1}}(\lambda_1) \cap \cdots \cap V^{\rho^{(i)}_{d}}(\lambda_d), \ \forall \lambda_1, \ldots, \lambda_d \in \Z.
\end{equation*}
Under the equivalence of categories of Theorem \ref{Kly}, the $T$-equivariant reflexive sheaves on $X$ correspond to the elements of the image of $\ccR \rightarrow \cT$ \cite{Kly1, Kly2}, \cite[Thm.~5.19]{Per}. Since rank 1 reflexive sheaves are line bundles \cite[Prop.~1.9]{Har}, one can easily see that the $T$-equivariant Picard group $\Pic^T(X)$ is isomorphic to $\Z^{\#\Delta(1)}$. \\

\noindent \emph{The toric variety $\PP^3$}. Most of this paper is devoted to the toric 3-fold $X = \PP^3$. However, most results of this paper do generalize to arbitrary smooth projective toric 3-folds. See Section 5 for a more precise discussion. As a toric 3-fold, $\PP^3$ is described by the lattice $N = \Z^3$ and the fan $\Delta$ consisting of 3-dimensional cones $\sigma_1=\langle e_1, e_2, e_3 \rangle_{\Z_{\geq 0}}$, $\sigma_2=\langle e_2, e_3, -e_1-e_2-e_3 \rangle_{\Z_{\geq 0}}$, $\sigma_3=\langle e_1, e_3, -e_1-e_2-e_3 \rangle_{\Z_{\geq 0}}$, $\sigma_4=\langle e_1, e_2, -e_1-e_2-e_3 \rangle_{\Z_{\geq 0}}$. Here $(e_1,e_2,e_3)$ is the standard basis of $\Z^3$. We denote the rays generated by $e_1,e_2,e_3,-e_1-e_2-e_3$ by $\rho_1, \rho_2, \rho_3, \rho_4$ respectively. 

The description of $T$-equivariant torsion free sheaves on $\PP^3$ is coordinate free up to a choice of ordering of the rays of each cone $\sigma_i$. For definiteness, we choose the following ordering
\begin{equation} \label{order}
\sigma_1: \ (\rho_1,\rho_2, \rho_3), \ \sigma_2: \ (\rho_2,\rho_3, \rho_4), \ \sigma_3: \ (\rho_1,\rho_3, \rho_4), \ \sigma_4: \ (\rho_1,\rho_2, \rho_4).
\end{equation}
A $T$-equivariant torsion free sheaf on $\PP^3$ is described by multi-filtrations as in Theorem \ref{Kly}. A $T$-equivariant reflexive sheaf on $\PP^3$ is described by simply attaching a flag to each of the four rays $\rho_1, \ldots, \rho_4$. Specifically, a $T$-equivariant rank 2 reflexive sheaf $\F$ on $\PP^3$ is specified by a collection of flags $\{V^{\rho_i}(\lambda)\}_{i=1, \ldots, 4}$ of $\C^{\oplus 2}$. As we discussed, the corresponding $\sigma$-families are defined by
\begin{align*} 
F^{\sigma_1}(\lambda_1, \lambda_2, \lambda_3) := V^{\rho_1}(\lambda_1) \cap V^{\rho_{2}}(\lambda_2) \cap V^{\rho_{3}}(\lambda_3), \ldots 
%F^{\sigma_2}(\lambda_1, \lambda_2, \lambda_3) := V^{\rho_2}(\lambda_1) \cap V^{\rho_{3}}(\lambda_2) \cap V^{\rho_{4}}(\lambda_3), \\
%F^{\sigma_3}(\lambda_1, \lambda_2, \lambda_3) := V^{\rho_1}(\lambda_1) \cap V^{\rho_{3}}(\lambda_2) \cap V^{\rho_{4}}(\lambda_3), \\
%F^{\sigma_4}(\lambda_1, \lambda_2, \lambda_3) := V^{\rho_1}(\lambda_1) \cap V^{\rho_{2}}(\lambda_2) \cap V^{\rho_{4}}(\lambda_3).
\end{align*}
The flags $\{V^{\rho_i}(\lambda)\}_{i = 1, \ldots, 4}$ can be described by indicating the integers where the vector spaces jump together with the 1-dimensional subspace occurring in each flag. More precisely, for each $i = 1, \ldots, 4$, there exist unique integers $u_i \in \Z$, $v_i \in \Z_{\geq 0}$ and a subspace $p_i \in \mathrm{Gr}(1,2) \cong \PP^1$ such that
\begin{equation}
V^{\rho_{i}}(\lambda) = \left\{\begin{array}{cc}  0 & \mathrm{if \ } \lambda < u_{i} \\ p_i & \mathrm{if \ } u_{i} \leq \lambda < u_{i} + v_i \\ \mathbb{C}^{\oplus 2} & \mathrm{if \ } u_{i} + v_i \leq \lambda. \end{array} \right. \nonumber
\end{equation}
Note that $v_i$ could be zero in which case $p_i$ does not occur. At such places, the flag jumps from $0$ to $\C^{\oplus 2}$. 
\begin{definition} \label{toricdata}
Instead of describing a $T$-equivariant rank 2 reflexive sheaf $\F$ on $\PP^3$ by the flags $\{V^{\rho_i}(\lambda)\}_{i = 1, \ldots, 4}$, we can also describe it by the data $\{(u_i, v_i, p_i)\}_{i=1, \ldots, 4}$ introduced above. We refer to $\{(u_i, v_i, p_i)\}_{i=1, \ldots, 4}$ as \emph{toric data} and abbreviate it by $({\bf{u}}, {\bf{v}}, {\bf{p}})$.  
\end{definition}

\section{Equivariant rank 2 reflexive sheaves on $\PP^3$}

In this section, we classify all $T$-equivariant rank 2 $\mu$-stable reflexive sheaves on $\PP^3$. As an application, we prove Theorem \ref{main} of the introduction, which gives an expression for the generating function
\[
\sfZ_{c_1,c_2}^{\mathrm{refl}}(q) = \sum_{c_3} e(\N_{\PP^3}(2,c_1,c_2,c_3)) q^{c_3}.
\]
For any polarized smooth projective 3-fold $X$, we show $\sfZ_{c_1,c_2}^{\mathrm{refl}}(q)$ is a polynomial (Proposition \ref{bounds}). For $X = \PP^3$ we reprove Hartshorne's inequalities in the $T$-equivariant setting (Theorem \ref{Hartshorne} of the introduction). As a corollary, we determine the leading coefficient of $\sfZ_{c_1,c_2}^{\mathrm{refl}}(q)$ (equation (\ref{upperEuler}) of the introduction).

\subsection{Chern classes}

In this section, we compute the Chern classes of a $T$-equivariant rank 2 reflexive sheaf $\F$ on $\PP^3$. \\

\noindent \emph{$T$-equivariant  line bundles on $\PP^3$}. We start with a short description of $T$-equivariant line bundles on $\PP^3$. As we saw in the previous section, these correspond to flags $\{V^{\rho_i}(\lambda)\}_{i=1, \ldots, 4}$ of $\C$. Therefore, they are fully specified by four integers $u_i$, corresponding to the values of $\lambda$ where $V^{\rho_i}(\lambda)$ jumps from 0 to $\mathbb{C}$. We denote the $T$-equivariant line bundle corresponding to $(u_1,u_2,u_3,u_4)$ by $\L_{(u_1,u_2,u_3,u_4)}$. This provides a group isomorphism
\begin{align*}
&\Z^4 \stackrel{\cong}{\longrightarrow} \mathrm{Pic}^{T}(X), \\
&(u_1,u_2,u_3,u_4) \rightarrow [\L_{(u_1,u_2,u_3,u_4)}].
\end{align*}
Forgetting the $T$-equivariant structure, $\L_{(1,0,0,0)}, \L_{(0,1,0,0)}, \L_{(0,0,1,0)}, \L_{(0,0,0,1)}$ are all isomorphic to $\O(-1)$. Hence
\[
c_1(\L_{(u_1,u_2,u_3,u_4)}) = -u_1-u_2-u_3-u_4.
\]
The kernel of the forgetful map 
\[
\mathrm{Pic}^{T}(X) \rightarrow \mathrm{Pic}(X)
\] 
is the character group $M$ so the above isomorphism descends to 
\begin{equation*} \label{character}
\{u_1 + u_2 + u_3 + u_4 = 0\} \subset \Z^4 \stackrel{\cong}{\longrightarrow} M.
\end{equation*}

Another elementary fact is the following. Let $\F$ be a $T$-equivariant torsion free sheaf on $\PP^3$ with $\sigma$-families $\{\hat{F}^{\sigma_i}\}_{i=1,\ldots,4}$. Then $\G = \F \otimes \L_{(u_1,u_2,u_3,u_4)}$ is a $T$-equivariant torsion free sheaf on $\PP^3$ and its $\sigma$-families are given by
\begin{align*}
G^{\sigma_1}(\lambda_1,\lambda_2,\lambda_3)=F^{\sigma_1}(\lambda_1-u_1,\lambda_2-u_2,\lambda_3-u_3), \\
G^{\sigma_2}(\lambda_1,\lambda_2,\lambda_3)=F^{\sigma_2}(\lambda_1-u_2,\lambda_2-u_3,\lambda_3-u_4), \\
G^{\sigma_3}(\lambda_1,\lambda_2,\lambda_3)=F^{\sigma_3}(\lambda_1-u_1,\lambda_2-u_3,\lambda_3-u_4), \\
G^{\sigma_4}(\lambda_1,\lambda_2,\lambda_3)=F^{\sigma_4}(\lambda_1-u_1,\lambda_2-u_2,\lambda_3-u_4).
\end{align*}
This follows from writing out the $M$-grading of a tensor product of two $M$-graded modules and using the ordering of rays (\ref{order}). More details can be found in \cite[Prop.~4.6]{Koo}. \\

\noindent \emph{$T$-equivariant d\'evissage and Chern classes}. In \cite{Kly2}, Klyachko gives an explicit formula for the Chern character of any $T$-equivariant torsion free sheaf on a smooth projective toric variety. We like to take a slightly different viewpoint and use the following lemma instead. A proof can be found in \cite[Lem~7.6]{GJK}.
\begin{lemma} \label{devissage}
Let $X$ be a smooth toric variety described by a fan $\Delta$ in a lattice $N$ of dimension $d$. Let $(\sigma_1, \ldots, \sigma_e)$ be the cones of maximal dimension. For each $i=1, \ldots, k$, let $(\rho^{(i)}_1, \ldots, \rho^{(i)}_d)$ be a choice of ordered rays of $\sigma_i$. Let $\F$, $\G$ be $T$-equivariant torsion free sheaves on $X$ with $\sigma$-families $\{\hat{F}^{\sigma_i}\}_{i=1, \ldots, e}$, $\{\hat{G}^{\sigma_i}\}_{i=1, \ldots, e}$. If $\mathrm{dim}(F^{\sigma_i}(\vec{\lambda})) = \mathrm{dim}(G^{\sigma_i}(\vec{\lambda}))$ for all $i, \vec{\lambda}$, then $\ch(\F) = \ch(\G)$.  
\end{lemma}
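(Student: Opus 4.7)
The plan is to reduce the equality of Chern characters to an equality of Hilbert polynomials $\chi(\F \otimes \L) = \chi(\G \otimes \L)$ for all $T$-equivariant line bundles $\L$, then to establish the latter via $T$-equivariant \v{C}ech cohomology on the affine cover $\{U_{\sigma_i}\}$, at which point everything reduces to counting dimensions of $\sigma$-family weight spaces.

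First I would reduce to a Hilbert polynomial comparison. On a smooth projective toric $X$ (the case of interest is $X=\PP^3$), Hirzebruch-Riemann-Roch gives $\chi(\F\otimes\L)=\int_X \ch(\F)\,\ch(\L)\,\td(X)$. Varying $\L$ over $T$-equivariant line bundles whose first Chern classes span $H^2(X,\Q)$ (e.g.~tensor powers of the line bundles associated to the toric divisors) determines $\ch(\F)\,\td(X)$, and hence $\ch(\F)$, since $\td(X)$ is invertible and $H^*(X,\Q)$ is generated in degree two for any smooth projective toric variety. Thus it suffices to show $\chi(\F\otimes\L)=\chi(\G\otimes\L)$ for every $T$-equivariant line bundle $\L$.

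Next I would compute these Euler characteristics weight-by-weight. Any intersection $U_{\sigma_I}=\bigcap_{i\in I}U_{\sigma_i}$ equals $U_{\cap_{i\in I}\sigma_i}$ and is again affine, so is acyclic for coherent sheaves; hence the \v{C}ech complex for $\{U_{\sigma_i}\}$ computes $R\Gamma(X,\F\otimes\L)$. Decomposing by weight $\vec m\in M$,
\[
\chi^T(\F\otimes\L)_{\vec m}=\sum_{k\ge 0}(-1)^k\sum_{|I|=k+1}\dim H^0\bigl((\F\otimes\L)|_{U_{\sigma_I}}\bigr)_{\vec m}.
\]
On a maximal cone $\sigma_i$, the corresponding weight space has dimension $\dim F^{\sigma_i}(\vec m-\vec u_i)$, where $\vec u_i$ is the toric datum of $\L$ at $\sigma_i$. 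On a proper face $\tau$ of $\sigma_i$ the gluing condition (\ref{glue}) identifies the weight space with a stabilized limit $F^{\sigma_i}(\lambda_1,\dots,\lambda_p,\infty,\dots,\infty)$ in the free directions; its dimension is again a value of the function $\vec\lambda\mapsto\dim F^{\sigma_i}(\vec\lambda)$. So every \v{C}ech summand is determined solely by the dimension data of the $\sigma$-families.

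Under the hypothesis the dimension functions for $\F$ and $\G$ coincide on every maximal cone, so each \v{C}ech summand matches and $\chi^T(\F\otimes\L)_{\vec m}=\chi^T(\G\otimes\L)_{\vec m}$ for every $\vec m$ and every $\L$. Summing and forgetting the $T$-structure gives $\chi(\F\otimes\L)=\chi(\G\otimes\L)$, and the Hirzebruch-Riemann-Roch step then yields $\ch(\F)=\ch(\G)$. The main technical obstacle is controlling the sum over $\vec m$: each individual term $\dim H^0((\F\otimes\L)|_{U_{\sigma_I}})_{\vec m}$ is nonzero for infinitely many $\vec m$, but the alternating combination must vanish outside a finite set by projectivity of $X$. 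Making this bookkeeping precise -- e.g.~by partitioning $M$ into regions relative to each maximal cone and invoking rationality of toric Hilbert series -- is the delicate point. For a non-projective smooth toric $X$ one replaces this argument by equivariant localization of $\ch^T(\F)$ to the fixed points $x_{\sigma_i}$, where the same local dimension-counting applies and no global finiteness is needed.
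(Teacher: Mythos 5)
Your argument is essentially correct, but note first that the paper does not prove this lemma itself: it defers entirely to \cite[Lem.~7.6]{GJK}. The proof there (and the standard one) is local rather than global: the class of $\F|_{U_{\sigma_i}}$ in the equivariant $K$-theory of $U_{\sigma_i}\cong\C^d$ is the character $\sum_{\vec\lambda}\dim F^{\sigma_i}(\vec\lambda)\,e^{m(\vec\lambda)}$ up to the fixed unit $\prod_j(1-e^{m_j})$, hence depends only on the dimension function, and one recovers $\ch(\F)$ from these restrictions to the fixed points by equivariant localization. Your route --- reduce to $\chi(\F\otimes\L)=\chi(\G\otimes\L)$ for all equivariant line bundles $\L$ via Hirzebruch--Riemann--Roch, degree-two generation of $H^*(X,\Q)$ and Poincar\'e duality, then compute each $\chi$ weight-by-weight from the \v{C}ech complex of the toric affine cover --- is a genuinely different, valid global argument, and it has the virtue of using nothing beyond the gluing description already set up in Section 2 of the paper.

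Two remarks. First, the ``delicate bookkeeping'' you flag at the end is not actually delicate: for each fixed weight $\vec m$, every \v{C}ech term is a weight space of a torsion free $\sigma$-family or one of its stabilized limits, hence a subspace of the limiting fibre of dimension at most $\rk\F$; so the weight-$\vec m$ \v{C}ech complex has finite-dimensional terms, its Euler characteristic is well defined and equals $\sum_k(-1)^k\dim H^k(X,\F\otimes\L)_{\vec m}$, and only finitely many weights contribute to the finite-dimensional total cohomology. No rationality of Hilbert series or rearrangement of infinite sums is needed. Second, your argument as written requires $X$ projective (or at least complete), whereas the lemma is stated for an arbitrary smooth toric variety; your closing sentence correctly identifies localization of $\ch^T$ at the fixed points as the fix in general, and that is in effect the proof the paper is citing. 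Since the paper only applies the lemma to $\PP^3$ and $\PP^2\times\PP^1$, the restriction is harmless for its purposes.
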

\begin{proposition} \label{Chern}
Let $\F$ be a $T$-equivariant rank 2 reflexive sheaf on $\PP^3$ with associated toric data $({\bf{u}}, {\bf{v}}, {\bf{p}})$. Define abbreviations 
\begin{align*}
u&:=u_1+u_2+u_3+u_4, \\
p_{ij}&:=1-\mathrm{dim}(p_i \cap p_j), \\
p_{ijk}&:=1-\mathrm{dim}(p_i \cap p_j \cap p_k).
\end{align*}
Then
\begin{align*}
&c_1(\F) = - \big(2u + \sum_i v_i \big), \\
&c_2(\F) = \frac{1}{4} c_1(\F)^2 + \frac{1}{2} \sum_{i<j} (2p_{ij} - 1) v_i v_j -\frac{1}{4} \sum_i v_{i}^{2}, \\
&c_3(\F) = \sum_{i<j<k} v_i v_j v_k (p_{ij}+p_{ik}+p_{jk} - 2p_{ijk}).
\end{align*}
\end{proposition}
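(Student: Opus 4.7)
My approach is to apply Lemma \ref{devissage}: the Chern character of $\F$ is determined by the dimensions of its sigma families at each cone. At cone $\sigma_i$ with ordered rays $(\rho_{i_1},\rho_{i_2},\rho_{i_3})$, the space $F^{\sigma_i}(\vec\lambda) = V^{\rho_{i_1}}(\lambda_1) \cap V^{\rho_{i_2}}(\lambda_2) \cap V^{\rho_{i_3}}(\lambda_3)$ is computed directly from the toric data: its dimension depends on which of the three ranges $(-\infty, u_{i_k})$, $[u_{i_k}, u_{i_k}+v_{i_k})$, $[u_{i_k}+v_{i_k}, \infty)$ each $\lambda_k$ lies in, together with the coincidence pattern of the lines $p_{i_k}$ for those indices whose $\lambda_k$ falls in the middle range. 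This turns the problem into combinatorics plus a comparison against a convenient model sheaf.

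A clean baseline arises when all four lines coincide, $p_1 = \cdots = p_4 = p$. Then $\{V^{\rho_i}(\lambda) \cap p\}_i$ defines a $T$-equivariant rank 1 reflexive subsheaf of $\F$, hence a line bundle by \cite[Prop.~1.9]{Har}, equal to $\L_{(u_1,\ldots,u_4)}$, with line bundle quotient $\L_{(u_1+v_1,\ldots,u_4+v_4)}$. Whitney's formula applied to
\[
0 \to \L_{(u_1,\ldots,u_4)} \to \F \to \L_{(u_1+v_1,\ldots,u_4+v_4)} \to 0,
\]
combined with the formula $c_1(\L_{(u_1,\ldots,u_4)}) = -(u_1+\cdots+u_4)H$ established earlier, gives $c_1(\F)=-(2u+\sum v_i)H$, $c_2(\F)=u(u+\sum v_i)H^2$, and $c_3(\F)=0$. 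A short algebraic manipulation (using $(\sum v_i)^2 = \sum v_i^2 + 2\sum_{i<j} v_i v_j$) shows this matches the stated formula after setting every $p_{ij}$ and $p_{ijk}$ to zero.

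For general $({\bf u},{\bf v},{\bf p})$, the sigma family dimensions differ from the baseline only in those regions where two or three of the lines $p_{i_k}$ attached to a common cone $\sigma$ fail to coincide; the sizes of these regions are controlled by products $v_i v_j$ respectively $v_i v_j v_k$, and the dimension drops from its baseline value by precisely $p_{ij}$ respectively $p_{ijk}$. Via Lemma \ref{devissage} one builds cone by cone a model sheaf with matching dimensions (assembled from direct sums and extensions of $T$-equivariant line bundles) and reads off the corrections to $\ch(\F)$. The main obstacle is the combinatorial bookkeeping: each pair $\{i,j\}$ lies in exactly two of the four maximal cones of $\PP^3$ and each triple in exactly one, so tracking the cone-wise corrections consistently --- and passing from the local $T$-equivariant data to ordinary classes in the one-dimensional spaces $H^{2k}(\PP^3,\Z)$ --- is exactly what produces the coefficients $(2p_{ij}-1)$ in $c_2$ (which is $+1$ for coincident $p_i,p_j$ and $-1$ otherwise) and $(p_{ij}+p_{ik}+p_{jk}-2p_{ijk})$ in $c_3$.
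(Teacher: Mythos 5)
Your proposal follows essentially the same route as the paper's proof: reduce to dimensions of $\sigma$-families via Lemma \ref{devissage}, take the coincident-$p_i$ case (a sum of the two equivariant line bundles $\L_{(u_1,\ldots,u_4)}$ and $\L_{(u_1+v_1,\ldots,u_4+v_4)}$) as baseline, and correct by pieces supported on the six invariant lines (regions of size $v_iv_j$ times a semi-infinite ray, governed by $p_{ij}$) and the four fixed points (boxes of size $v_iv_jv_k$, governed by $p_{ijk}$) --- which is exactly the paper's filtration of the model sheaf by the sheaves $\mathcal{P}_{klmn}^{(ij)}$ and $\cQ_{klm}^{(i)}$. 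One small slip in your parenthetical: $2p_{ij}-1$ equals $-1$ when $p_i=p_j$ (since then $p_{ij}=0$) and $+1$ when they are distinct, the opposite of what you wrote; this does not affect the structure of the argument.
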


\begin{proof}
We compute the Chern character $\ch(\F)$. We start with the case all $p_i$ are equal. In this case $\F$ is the direct sum of 
\begin{align*}
\L_1 &:= \L_{(u_1,u_2,u_3,u_4)}, \\
\L_2 &:= \L_{(u_1+v_1,u_2+v_2,u_3+v_3,u_4+v_4)},
\end{align*}
and the formula follows from
\begin{align} \label{3dim}
\begin{split}
\ch(\F) &=e^{-u h}+ e^{-(u+\sum_i v_i)h}, 
%&= 2 - \big(2u + \sum_i v_i \big) H + \Big(\frac{1}{2} u^2 + \frac{1}{2}\big(u + \sum_i v_i \big)^2 \Big) H^2 \\
%&\quad - \Big(\frac{1}{6} u^3 + \frac{1}{6}\big(u + \sum_i v_i \big)^3 \Big) H^3.
\end{split}
\end{align}
where $h$ denotes the hyperplane class. 

For general $p_i$, we construct a $T$-equivariant subsheaf $\G \subset \L_1 \oplus \L_2$ such that $\ch(\G)$ is easy to compute and $\mathrm{dim}(F^{\sigma_i}(\vec{\lambda})) = \mathrm{dim}(G^{\sigma_i}(\vec{\lambda}))$ for all $i, \vec{\lambda}$. The result follows from computing $\ch(\G)$ and applying Lemma \ref{devissage}. We define $\G \subset \L_1 \oplus \L_2$ by the following $\sigma$-families $\{\hat{G}^{\sigma_i}\}_{i=1, \ldots, 4}$
\begin{align*}
G^{\sigma_i}(\vec{\lambda}) = \left\{ \begin{array}{cc} 0 & \mathrm{if} \ \mathrm{dim}(F^{\sigma_i}(\vec{\lambda}) = 0 \\ L_{1}^{\sigma_i}(\vec{\lambda}) \oplus L_{2}^{\sigma_i}(\vec{\lambda}) & \mathrm{otherwise}, \end{array} \right.
\end{align*}
where $\{\hat{L}_{a}^{\sigma_i}\}_{i=1, \ldots, 4}$ are the $\sigma$-families of $\L_a$ for $a=1,2$.

The 3-fold $\PP^3$ contains six 1-dimensional torus invariant lines ($ \cong \PP^1$). Fix one of them, say the one corresponding to $\sigma_1 \cap \sigma_2$. For any integers $k,l,m,n$, we define a 1-dimensional $T$-equivariant sheaf $\mathcal{P}_{klmn}^{(12)}$ supported on the line corresponding to $\sigma_1 \cap \sigma_2$. It is defined by the following $\sigma$-families $\{\hat{P}^{(12), \sigma_i}_{klmn}\}_{i=1, \ldots, 4}$: let $\hat{P}^{(12), \sigma_i}_{klmn} = 0$ when $i \neq 1,2$ and 
\begin{align*}
&P^{(12), \sigma_1}_{klmn}(\lambda_1, \lambda_2, \lambda_3) = \left\{\begin{array}{cc} \C & \mathrm{if} \ \lambda_1 \geq k, \ \lambda_2 = l \ \mathrm{} \ \lambda_3=m \\ 0 & \mathrm{otherwise}, \end{array} \right. \\
&P^{(12), \sigma_2}_{klmn}(\lambda_1, \lambda_2, \lambda_3) = \left\{\begin{array}{cc} \C & \mathrm{if} \ \lambda_1=l, \ \lambda_2=m \ \mathrm{} \ \lambda_3 \geq n \\ 0 & \mathrm{otherwise}. \end{array} \right.
\end{align*}
This glues by equations (\ref{glue}), (\ref{order}). The sheaf $\mathcal{P}_{klmn}^{(12)}$ is just the push-forward of $\O_{\PP^1}$ (with trivial $T$-equivariant structure) tensored by $\L_{(k,l,m,n)}$. Similarly, one can define toric sheaves $\mathcal{P}_{klmn}^{(ij)}$ for all $i<j \in \{1, \ldots, 4\}$. The Chern character of $\mathcal{P}_{klmn}^{(12)}$ (or any $\mathcal{P}_{klmn}^{(ij)}$) is easily computed using the following $T$-equivariant resolution
\[
0 \rightarrow \L_{(k,l+1,m+1,n)} \rightarrow \L_{(k,l+1,m,n)} \oplus \L_{(k,l,m+1,n)} \rightarrow \L_{(k,l,m,n)} \rightarrow \mathcal{P}_{klmn}^{(12)} \rightarrow 0.
\]
Here the first map is $v \mapsto (v,v)$, the second map is $(v,w) \mapsto v-w$ and the third map is the cokernel map. The resolution gives
\begin{align*} 
\ch(\mathcal{P}_{klmn}^{(12)}) &= e^{-(k+l+m+n)h} - 2e^{-(k+l+m+n+1)h} + e^{-(k+l+m+n+2)h} \\
&=h^2 - (k+l+m+n+1) h^3.
\end{align*}

The 3-fold $\PP^3$ contains four torus fixed points. Fix one of them, say the one corresponding to $\sigma_1$. For any integers $k,l,m$, we define a $T$-equivariant 0-dimensional sheaf $\cQ_{klm}^{(1)}$ supported on the torus fixed point corresponding to $\sigma_1$. It is defined by the following $\sigma$-families $\{\hat{Q}^{(1), \sigma_i}_{klm}\}_{i=1, \ldots, 4}$: let $\hat{Q}^{(1), \sigma_i}_{klm} = 0$ when $i \neq 1$ and
\begin{align*}
Q^{(1), \sigma_1}_{klm}(\lambda_1, \lambda_2, \lambda_3) = \left\{\begin{array}{cc} \C & \mathrm{if} \ \lambda_1 = k, \ \lambda_2 = l \ \mathrm{} \ \lambda_3=m \\ 0 & \mathrm{otherwise}. \end{array} \right. 
\end{align*}
The sheaf $\cQ_{klm}^{(1)}$ is just the skyscraper sheaf $\O_p$ of the torus fixed point $p$ (with trivial $T$-equivariant structure) tensored by $\L_{(k,l,m,0)}$. Similarly, one can define toric sheaves $\cQ_{klm}^{(i)}$ for all $i=1, \ldots, 4$. The Chern character of $\cQ_{klm}^{(1)}$ (or any $\cQ_{klm}^{(i)}$) is clearly 
\[
\ch(\cQ_{klm}^{(1)}) = h^3.
\]

It is not hard to write down a toric filtration $\G:=\G^{(N)} \subset \G^{(N-1)} \subset \cdots \subset \G^{(1)} \subset \G^{(0)}:=\L_1 \oplus \L_2$ where each quotient $\G^{(i)} / \G^{(i-1)}$ is one of the sheaves $\mathcal{P}_{klmn}^{(ij)}$ or $\cQ_{klm}^{(i)}$. The formula for $\ch(\G)$ is obtained by subtracting the following contribution from equation (\ref{3dim})
\[
\sum_{i<j} \sum_{a=0}^{v_i-1}\sum_{b=0}^{v_j-1}\big(h^2 - (u+v_k+v_l+1+a+b) h^3 \big)p_{ij} + \sum_{i < j < k} v_i v_j v_k p_{ijk} h^3.
\]
Here the first sum is over all $1 \leq i < j  \leq 4$ and $k<l$ are the remaining two indices among $1,2,3,4$.
\end{proof}

\subsection{Classification}

Let $\F$ be a $T$-equivariant reflexive sheaf on $\PP^3$. In order to test whether $\F$ is $\mu$-stable, it suffices to consider $T$-equivariant subsheaves only. It is easy to see that $\F$ is $\mu$-semistable if  only if $\mu(\G) \leq \mu(\F)$ for all $T$-equivariant subsheaves $\G \subset \F$ with $0 < \mathrm{rk}(\G) < \mathrm{rk}(\F)$. This follows from the fact that the Harder-Narasimhan filtration of $\F$ consists of $T$-equivariant subsheaves. With more work, one can show that $\F$ is $\mu$-stable if  only if $\mu(\G) < \mu(\F)$ for all $T$-equivariant subsheaves $\G \subset \F$ with $0 < \mathrm{rk}(\G) < \mathrm{rk}(\F)$ \cite[Prop.~4.13]{Koo}. Moreover, it suffices to test stability for saturated subsheaves only \cite[Def.~1.1.5]{HL}. In addition, a saturated subsheaf of a reflexive sheaf is reflexive \cite[Lem.~II.1.1.16]{OSS}. Therefore, in the rank 2 case, we only need test stability for $T$-equivariant saturated line subbundles of $\F$.
%Note: the saturation of an equivariant subsheaf of an equivariant coherent sheaf is equivariant by \cite[p.~4]{HL} and proof of \cite[Prop.~2.5]{Koo}.
\begin{proposition} [Classification] \label{class}
Let $\F$ be a $T$-equivariant rank 2 reflexive sheaf on $\PP^3$ with associated toric data $({\bf{u}}, {\bf{v}}, {\bf{p}})$. Then $\F$ is $\mu$-stable if  only if one of the following holds:
\begin{enumerate}
\item[$\mathrm{(1)}$] $0<v_i<v_j + v_k + v_l$ for all $\{ i,j,k,l \}=\{ 1,2,3,4 \}$ and all $p_i$ are mutually distinct\footnote{The notation ``for all/there exist $\{ i,j,k,l \}=\{ 1,2,3,4 \}$'' is explained in the introduction.},
\item[$\mathrm{(2)}$] $v_1,v_2,v_3,v_4>0$, there are $\{ i,j,k,l \}=\{1,2,3,4\}$ such that $v_i + v_j < v_k + v_l$, $v_k < v_i + v_j + v_l$, $v_l < v_i + v_j + v_k$, $p_i = p_j$, and  $p_j, p_k, p_l$ are mutually distinct,
\item[$\mathrm{(3)}$] there are $\{ i,j,k,l \}=\{1,2,3,4\}$ such that $v_i=0$, $v_j, v_k, v_l >0$, $v_j < v_k + v_l$, $v_k < v_j + v_l$, $v_l < v_j + v_k$, and $p_j, p_k, p_l$ are mutually distinct.
\end{enumerate}
\end{proposition}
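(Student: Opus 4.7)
The plan is to reduce $\mu$-stability of $\F$ to explicit numerical inequalities on the toric data, by classifying all $T$-equivariant saturated line subbundles of $\F$ and testing the slope inequality against each. As recalled just before the statement, $\mu$-stability for a rank $2$ sheaf reduces to checking $\mu(\L) < \mu(\F)$ for every $T$-equivariant saturated line subbundle $\L \subset \F$, so the only task is to describe this set and extract the resulting inequalities.

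First I would describe such $\L$ explicitly. Any $T$-equivariant rank $1$ reflexive sheaf is a line bundle $\L_{(u'_1,u'_2,u'_3,u'_4)}$, and a $T$-equivariant injection $\L \hookrightarrow \F$ amounts to choosing a $1$-dimensional ``ambient'' subspace $p \subset \C^{\oplus 2}$ together with integers $u'_i$ satisfying $p \subset V^{\rho_i}(\lambda)$ for all $\lambda \geq u'_i$. Saturation forces $u'_i$ to be the smallest such integer; reading this minimum off the flag $\{V^{\rho_i}(\lambda)\}$ encoded by $(u_i, v_i, p_i)$ yields
\[
u'_i = \begin{cases} u_i & \text{if } v_i = 0 \text{ or } p = p_i, \\ u_i + v_i & \text{if } v_i > 0 \text{ and } p \neq p_i. \end{cases}
\]
Hence the $T$-equivariant saturated line subbundles of $\F$ are parametrised by $p \in \PP^1$, with invariants $u'_i$ given above.

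Next I would convert the slope inequality into a combinatorial one. Combining $c_1(\L) = -\sum_i u'_i \cdot H$ with the formula for $c_1(\F)$ in Proposition~\ref{Chern}, the condition $\mu(\L) < \mu(\F)$ simplifies, after cancelling $u = u_1+u_2+u_3+u_4$, to
\[
\sum_{i \,:\, v_i > 0,\ p_i = p} v_i \;<\; \tfrac{1}{2} \sum_i v_i \qquad \text{for all } p \in \PP^1. \quad (\star)
\]
For $p$ outside the ``active'' set $\{p_i : v_i > 0\}$ the left side vanishes, so $(\star)$ merely forces $\sum_i v_i > 0$ there; the actual content is carried by the finitely many active values of $p$.

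The proof is then completed by a case analysis of the partition $\Pi$ of the active indices $\{i : v_i > 0\}$ induced by the equivalence relation $p_i = p_j$. Summing $(\star)$ over the blocks of $\Pi$ gives $\sum_i v_i < \tfrac{|\Pi|}{2}\sum_i v_i$, forcing $|\Pi| \geq 3$. Since there are at most $4$ active indices, only three possibilities survive: (a) four active indices with all $p_i$ mutually distinct, (b) four active indices with exactly one coincidence $p_i = p_j$, or (c) three active indices (one $v_i = 0$) with the three remaining $p$'s distinct. In each surviving case the individual instances of $(\star)$ at the active values translate directly into the inequalities stated in cases $(1)$, $(2)$, $(3)$ respectively, and conversely those inequalities imply $(\star)$. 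The only mildly subtle point, which drives the case split, is that when $v_i = 0$ the datum $p_i$ is vacuous and must be omitted from $\Pi$; this is exactly what distinguishes the three surviving configurations from the degenerate ones (two or more vanishing $v_i$, or a pairing combined with another vanishing $v_k$), which all fail $|\Pi| \geq 3$.
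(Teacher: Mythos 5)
Your proposal is correct and follows essentially the same route as the paper: identify the $T$-equivariant saturated line subbundles (the intersections of the flags with a fixed $p\in\PP^1$), compute their slopes from the first Chern classes, and translate $\mu(\L)<\mu(\F)$ into the stated inequalities. Your uniform criterion $(\star)$ together with the partition-counting argument forcing $|\Pi|\geq 3$ is a clean systematization of the case analysis that the paper only sketches (``Cases 2, 3: Similar''), but it is the same underlying argument.
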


\begin{definition} 
We refer to $T$-equivariant rank 2 $\mu$-stable reflexive sheaves on $\PP^3$ with toric data satisfying (1), (2), (3) as sheaves of \emph{type} (1), (2), (3) respectively. 
\end{definition}

\begin{proof}
Denote the $\sigma$-families of $\F$ by $\{\hat{F}^{\sigma_i}\}_{i=1, \ldots, 4}$.The sheaf $\F$ has at most four $T$-equivariant saturated line subbundles. They can be described as follows. For each $a=1, \ldots, 4$, define the $T$-equivariant line bundle $\L_a$ by the $\sigma$-families
\[
\{\hat{F}^{\sigma_i} \cap p_a\}_{i=1, \ldots, 4}.
\]
The sheaf $\F$ can only be $T$-equivariantly indecomposable if at least three $v_i$'s are positive and at least three $p_i$'s are mutually distinct. \\

\noindent \emph{Case 1}. All $v_i >0$ and all $p_i$ are mutually distinct. From the description of $\F$ in terms of its toric data $({\bf{u}}, {\bf{v}}, {\bf{p}})$, it is clear that there are four saturated $T$-equivariant line subbundles $\L_a$. Using Proposition \ref{Chern} (or \cite[Prop.~3.20]{Koo}), it is easy to show that 
\begin{align*}
\mu(\L_a) = v_a-\sum_{i}(u_i+v_i), \ \mu(\F) = \frac{1}{2}(v_1 + v_2 + v_3 + v_4)-\sum_{i}(u_i+v_i). 
\end{align*} 
Hence $\F$ is $\mu$-stable if  only if  $v_i<v_j + v_k + v_l$ for all $\{ i,j,k,l \}=\{ 1,2,3,4 \}$. \\

\noindent \emph{Cases 2, 3}. Similar.
\end{proof}

\begin{remark} 
In order to compute the slopes in the proof of Proposition \ref{class}, it suffices to know $c_1(\F)$, $c_1(\L_a)$ only. This greatly simplifies the computations. See \cite[Prop.~3.20]{Koo} for a generalization to torsion free sheaves of any rank on any polarized smooth projective toric variety.
\end{remark}

\subsection{Generating function for reflexive sheaves}

In this section, we prove Theorem \ref{main} of the introduction. The proof follows by combining torus localization, the formula for Chern classes in Proposition \ref{Chern}, and the classification in Proposition \ref{class}. For any integers $c_1, c_2$, and writing the components of a vector ${\bf{v}}$ by $v_i$, we introduce three sets
\begin{align*}
D_1(c_1,c_2) := \Big\{ {\bf{v}} \in \Z^{4}_{>0} \ | \ &c_1 + \sum_{i} v_i = 0 \ \mathrm{mod} \ 2, \ \frac{c_{1}^{2}}{4} + \frac{1}{2} \sum_{i < j} v_i v_j - \frac{1}{4} \sum_i v_{i}^{2} = c_2, \\
&v_i < v_j+v_k+v_l \ \forall \{i,j,k,l\}=\{1,2,3,4\}  \Big\}, \\
D_2(c_1,c_2) := \Big\{ {\bf{v}} \in \Z^{4}_{>0} \ | \ &c_1 + \sum_{i} v_i = 0 \ \mathrm{mod} \ 2, v_1 + v_2 < v_3+v_4, \\ 
&v_3 < v_1+v_2+v_4, \ v_4<v_1+v_2+v_3, \\
& \frac{c_{1}^{2}}{4} - v_1 v_2 + \frac{1}{2} \sum_{i < j} v_i v_j - \frac{1}{4} \sum_i v_{i}^{2} = c_2 \Big\}, \\
D_3(c_1,c_2) := \Big\{ {\bf{v}} \in \Z^{3}_{>0} \ | \ &c_1 + \sum_{i} v_i = 0 \ \mathrm{mod} \ 2, \ \frac{c_{1}^{2}}{4} + \frac{1}{2} \sum_{i < j} v_i v_j - \frac{1}{4} \sum_i v_{i}^{2} = c_2, \\ 
&v_i < v_j+v_k \ \forall \{i,j,k\}=\{1,2,3\} \Big\}. 
\end{align*}
\begin{proof} [Proof of Theorem \ref{main}]
Fix $c_1$, $c_2$, and $c_3$. The action of $T$ on $\PP^3$ lifts to an action on $\N_{\PP^3}(2,c_1,c_2,c_3)$ by \cite[Prop.~4.1]{Koo}. Pointwise, this action is $t \cdot [\F] = [t^* \F]$. Hence $e(\N_{\PP^3}(2,c_1,c_2,c_3)) = e(\N_{\PP^3}(2,c_1,c_2,c_3)^T)$. Take an element $[\F] \in \N_{\PP^3}(2,c_1,c_2,c_3)^T$, then $\F$ admits a $T$-equivariant structure \cite[Prop.~4.4]{Koo} and this $T$-equivariant structure is unique up to tensoring by a character \cite[Prop.~4.5]{Koo}.

Let $\cC$ be the collection of $T$-equivariant isomorphism classes of $T$-equivariant rank 2 $\mu$-stable reflexive sheaves on $\PP^3$ with Chern classes $c_1,c_2,c_3$. To any such sheaf, we associate toric data $({\bf{u}},{\bf{v}},{\bf{p}})$ (Definition \ref{toricdata}). Forgetting the $T$-equivariant structure gives a surjective map $\cC \rightarrow \N_{\PP^3}(2,c_1,c_2,c_3)^T$, but this map is not injective. Specifically, for any $[\F] \in \cC$ and character $0 \neq \chi \in M$, the $T$-equivariant sheaves $\F$ and $\F\otimes \O(\chi)$ are isomorphic but not $T$-equivariantly isomorphic. However, for each $[\F] \in \cC$, there exists a \emph{unique} character $\chi \in M$ such that $\F \otimes \O(\chi)$ has toric data $({\bf{u}},{\bf{v}},{\bf{p}})$ satisfying $u_1=u_2=u_3=0$.  Let $\cC^{\mathrm{slice}} \subset \cC$ be the collection of $T$-equivariant isomorphism classes $[\F]$ for which the toric data $({\bf{u}},{\bf{v}},{\bf{p}})$ satisfies $u_1=u_2=u_3=0$. Then $\cC^{\mathrm{slice}} \rightarrow \N_{\PP^3}(2,c_1,c_2,c_3)^T$ is a bijection. In fact, it is proved in \cite[Thm.~4.15]{Koo} that the set $\cC^{\mathrm{slice}}$ can be naturally made into a coarse moduli space of $T$-equivariant sheaves and the bijection is an \emph{isomorphism of schemes}.

Each connected component of $\cC^{\mathrm{slice}}$ contains sheaves of one of the three types in the classification (Proposition \ref{class}). Sheaves of type (2) and (3) always occur as isolated reduced points.  Sheaves of type (1) occur in a component which is isomorphic to 
\begin{align*}
\{(p_1,p_2,p_3,p_4) \in (\PP^1)^4 \ | \ p_i \ \mathrm{mutually \ distinct}\} /_{(v_1,v_2,v_3,v_4)} \mathrm{SL}(2,\C) \cong \C^* \setminus \{1\},
\end{align*}
where $(v_1, v_2, v_3, v_4)$ is the linearization (see \cite[Sect.~4.4]{Koo} for details).
%More generally: consider $U \subset (\PP^)^{n+3}$ the open subset of $n+3$ mutually distinct points. Let $L$ be a G$-equivariant line bundle on $U$ where $G = \mathrm{SL}(2, \C)$. Assume all points of $U$ are stable. Replacing $L$ by $L^2$ does not change the notion of stability. Moreover, $L^2$ admits a $\mathrm{PGL}(2,\C)$-equivariant structure \cite{HL}, the notion of stability w.r.t~$\mathrm{PGL}(2,\C)$ is the same, and the two good geometric quotients are the same quotients. So we have $U/_{L} \mathrm{SL}(2,\C) \cong U /_{L^2} \mathrm{PGL}(2,\C)$. The latter is clearly a principal $\mathrm{PGL}(2,\C)$-bundle \cite{MFK}. Also, $U$ is clearly $\mathrm{PGL}(2,\C)$-equivariantly isomorphic to $\mathrm{PGL}(2,\C) \times (\PP^1)^n$. From the property of categorical quotients we have $U /_{L^2} \mathrm{PGL}(2,\C) \cong (\PP^1)^n$. So $U /_{L} \mathrm{SL}(2,\C) \cong (\PP^1)^n$.

Fix a type $\mathrm{Y}=(1), (2), (3)$. The Chern character of a type $\mathrm{Y}$ sheaf with toric data $({\bf{u}},{\bf{v}},{\bf{p}})$ is given by Proposition \ref{Chern} and only depends on the integers $u:=\sum_i u_i$ and the $v_i$. We denote the expression for this Chern character by 
\[
\ch^\mathrm{Y}(u,{\bf{v}}).
\] 
Then $c_{1}^{\mathrm{Y}}(u, {\bf{v}})=c_1$ if and only if $2 \ | \ c_1 + \sum_i v_i$ and 
\[
u= - \frac{1}{2}\big(c_1 +\sum_i v_i\big).
\]
Therefore, we can see $u$ and $\ch^\mathrm{Y}(u, {\bf{v}})$ as functions\footnote{Recall that we think of $c_1$ as fixed.} of the $v_i$ 
\[
\ch^{\mathrm{Y}}({\bf{v}}):= \ch^{\mathrm{Y}} \Big(- \frac{1}{2}\big(c_1 +\sum_i v_i \big), {\bf{v}} \Big).
\]
We obtain
\begin{align*}
e(\cC^{\mathrm{slice}}) = &\sum_{{\bf{v}} \in D_1(c_1,c_2) \ \mathrm{s.t.} \ c_{3}^{(1)}({\bf{v}}) = c_3} e(\C^* \setminus \{1\}) + \sum_{{\bf{v}} \in D_2(c_1,c_2) \ \mathrm{s.t.} \ c_{3}^{(2)}({\bf{v}}) = c_3} 6e(\{pt\}) + \\
&\sum_{{\bf{v}} \in D_3(c_1,c_2) \ \mathrm{s.t.} \ c_{3}^{(3)}({\bf{v}}) = c_3} 4e(\{pt\}).
\end{align*}
Here terms 1, 2, and 3 correspond to the contributions of components of type (1), (2), and (3) respectively. The factor $6$ corresponds to all ways of choosing $p_i = p_j$, $i \neq j$. The factor $4$ corresponds to all ways of choosing $v_i=0$. 
\end{proof}

It is true that $\sfZ_{c_1,c_2}(q)$ is always a polynomial, though this is not clear from the expression in Theorem \ref{main}. We will deduce polynomiality from Proposition \ref{bounds} below (which holds for any 3-fold). Alternatively, polynomiality of $\sfZ_{c_1,c_2}(q)$ also follows from Proposition \ref{equivHartshorne} below (Harthorne's bounds in the $T$-equivariant case). In addition, we will obtain a formula for the leading term of $\sfZ_{c_1,c_2}(q)$ (equation \eqref{upperEuler} of the introduction).

The following proposition establishes polynomiality on \emph{any} polarized smooth projective 3-fold. Part (i) is a slight generalization of a result of R.~Hartshorne \cite[Prop.~2.6]{Har}. The proof for part (ii) was suggested to us by R.~P.~Thomas.
\begin{proposition} \label{bounds}
Let $X$ be a smooth projective variety of dimension $n$ with polarization $H$. 
\begin{enumerate}
\item[$\mathrm{(i)}$] Suppose $n=3$. Then\footnote{We assume $X$ is connected so $H^6(X,\Z) \cong \Z$ is generated by the class of a point $pt$. Hence we can view $c_3(\F)$ as an integer.} $c_3(\F) = h^0(\ext^1(\F,\O_X))$ for any rank 2 reflexive sheaf $\F$ on $X$. In particular, $c_3(\F) \geq 0$ and $c_3(\F) = 0$ if and only if $\F$ is locally free.
\item[$\mathrm{(ii)}$] Suppose $n \geq 2$ and fix $r \in \Z_{>0}$, $c_1 \in H^2(X,\Z)$, $\ldots$, $c_{n-1} \in H^{2n-2}(X,\Z)$. Then there exists a universal constant $C \in \Z$ depending on $X$, $H$, $r$, $c_1$, $\ldots$, $c_{n-1}$ such that any rank $r$ $\mu$-stable torsion free sheaf $\F$ on $X$ with Chern classes $c_1(\F) = c_1$, $\ldots$, $c_{n-1}(\F) = c_{n-1}$ satisfies $c_n(\F) \geq C$ if $n$ is even and $c_n(\F) \leq C$ if $n$ is odd.
\end{enumerate}
In particular, the generating function
\[
\sum_{c_3} e(\N_{X}^{H}(2,c_1,c_2,c_3)) q^{c_3}
\]
of Euler characteristics of moduli spaces $\N_{X}^{H}(2,c_1,c_2,c_3)$ of rank 2 $\mu$-stable reflexive sheaves with fixed $c_1, c_2$ on any smooth projective 3-fold $X$ with respect to any polarization $H$ is a polynomial.
\end{proposition}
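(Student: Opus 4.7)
For part (i) the plan is to adapt Hartshorne's classical argument. Since $\F$ is reflexive of rank $2$ on a smooth $3$-fold, its non-locally-free locus has codimension at least three and is therefore $0$-dimensional; applying the Auslander-Buchsbaum formula stalkwise then gives $\ext^i(\F,\O_X) = 0$ for $i \geq 2$ and $\ext^1(\F,\O_X)$ supported on this $0$-dimensional set. I would take a finite locally free resolution $0 \to E_1 \to E_0 \to \F \to 0$ and dualize it to
\[
0 \to \F^* \to E_0^* \to E_1^* \to \ext^1(\F,\O_X) \to 0,
\]
using that rank $2$ forces $\F^* \cong \F \otimes (\det \F)^{-1}$, an identification that holds on the locally free locus and then extends across the $0$-dimensional singular locus by \cite[Prop.~1.6]{Har}. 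Comparing the degree $3$ part of $\ch(\F^*) = \ch(\F)\, e^{-c_1(\F)}$ with $\ch(E_0^*) - \ch(E_1^*) + \ch(\ext^1(\F,\O_X))$ then collapses to the identity $c_3(\F) = h^0(\ext^1(\F,\O_X))$, yielding both $c_3 \geq 0$ and the locally free criterion (since $\ext^1(\F,\O_X)=0$ forces $\F$ to have homological dimension $0$ everywhere).

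For part (ii) the plan is to combine Langer's boundedness theorem with Hirzebruch-Riemann-Roch. Langer's theorem ensures that the $\mu$-semistable torsion free sheaves on $(X,P)$ of fixed $r$, $c_1$, $c_2$ form a bounded family, hence so do those satisfying the hypotheses of the proposition. From boundedness I would extract a uniform Castelnuovo-Mumford regularity bound $N = N(X,P,r,c_1,c_2)$ so that $h^i(\F(m)) = 0$ for $i > 0$ and $m \geq N$, whence $\chi(\F(m)) = h^0(\F(m))$ is uniformly bounded above by a constant depending only on $X,P,r,c_1,\ldots,c_{n-1}$ and $m$. Expanding $\chi(\F(m)) = \int_X \ch(\F)\, e^{m[P]}\, \td(X)$, the class $c_n(\F)$ enters only through $\ch_n(\F)$ with coefficient $(-1)^{n-1}/(n-1)!$ by Newton's identity, while all other terms are determined by $c_1,\ldots,c_{n-1}$, $m$ and the geometry of $(X,P)$. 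The upper bound on $\chi(\F(m))$ thus yields an upper bound on $(-1)^{n-1} c_n(\F)$, which is an upper bound on $c_n(\F)$ when $n$ is odd and a lower bound when $n$ is even, exactly as claimed.

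The polynomiality of the generating function is then immediate: for $X$ a smooth projective $3$-fold and $\F$ rank $2$ reflexive and $\mu$-stable, part (i) gives $c_3 \geq 0$ and part (ii) with $n=3$ gives $c_3 \leq C$, so $\N_X^P(2,c_1,c_2,c_3)$ is empty outside a finite range of $c_3$ and the sum has only finitely many nonzero terms. I expect the main obstacle to lie in part (ii): Langer's boundedness theorem is the nontrivial input that must be cited, and one must track carefully the parity-dependent sign $(-1)^{n-1}$ arising from Newton's identity in $\ch_n$ to recover precisely the direction of the bound stated in the proposition.
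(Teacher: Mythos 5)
Your part (i) is essentially the paper's argument: both use $\F^* \cong \F \otimes (\det\F)^{-1}$ (so $c_3(\F^*)=c_3(\F)$ by the rank-2 twisting identity), a length-one locally free resolution, and a comparison of Chern classes of the dualized sequence using that $\ext^1(\F,\O_X)$ is $0$-dimensional. That part is fine.

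Part (ii) has a genuine gap. The family you invoke --- $\mu$-(semi)stable torsion free sheaves with only $r, c_1,\dots,c_{n-1}$ fixed and $c_n$ free --- is \emph{not} bounded, and no boundedness theorem of Langer or Maruyama applies to it: those results require fixing the full Hilbert polynomial (equivalently all Chern numbers), or at least a finite set of Hilbert polynomials. Indeed, if this family were bounded it would be parametrized by a finite-type scheme, $c_n$ would take only finitely many values, and you would get bounds on $c_n$ in \emph{both} directions; but Theorem \ref{thmGKY} already shows that for $X=\PP^3$, $r=2$, $c_1=-1$, $c_2=1$ infinitely many values of $c_3$ occur (the series $M(q^{-1})^8$ is infinite), so $c_3$ is unbounded below for torsion free sheaves. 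Consequently there is no uniform Castelnuovo--Mumford regularity bound $N$ for this family, and your step ``$h^i(\F(m))=0$ for $i>0$, $m\geq N$, whence $\chi(\F(m))=h^0(\F(m))$'' fails. The argument is circular: the one-sided bound on $c_n$ is exactly the missing ingredient that would make the (reflexive part of the) family bounded. A partial salvage via the Le Potier--Simpson estimate gives an a priori upper bound on $h^0(\F(m))$ without boundedness, and Serre duality plus semistability of $\F^\vee$ controls $h^n$, but the inequality you need is $\chi(\F(m))\leq \sum_{i\ \mathrm{even}} h^i(\F(m))$, and the intermediate even-index groups ($h^2$ for $n=3$) are not controlled by these estimates; so even this route requires a substantial new idea. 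Your sign bookkeeping via Newton's identity is correct, but it is downstream of the broken step.

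The paper's proof of (ii) is entirely different and avoids boundedness: arguing by contradiction, given a sequence $\F_i$ with $c_n(\F_i)$ increasing (say $n$ odd, all $c_n(\F_i)\equiv 0 \bmod (n-1)!$ after passing to a subsequence), one repeatedly takes kernels of surjections $\F_i\to\O_{p}$ at points where the sheaf is locally free; each step lowers $c_n$ by $(n-1)!$ and preserves $\mu$-stability and $c_1,\dots,c_{n-1}$, so after $N_i$ steps one lands in the single finite-type moduli space $\M^P_X(r,c_1,\dots,c_{n-1},0)$. Moving the $N_i$ chosen points gives Zariski tangent spaces of dimension $\geq nN_i\to\infty$ there, contradicting finite type. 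You may want to adopt this argument, or else find a genuinely non-circular source for a one-sided estimate on $\chi(\F(m))$.
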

\begin{proof}
In the case $X = \PP^3$, part (i) is proved in \cite[Prop.~2.6]{Har}. Except for a small modification in the beginning, the proof works on any polarized smooth projective 3-fold $X$. For completeness, we quickly give the argument. Since $\F$ is reflexive and of rank 2, $\F^* \cong \F \otimes (\det \F)^{-1}$ \cite[Prop.~1.10]{Har}. We now compute $c_3(\F^*)$ in two ways: using this formula and using a locally free resolution of $\F$. We observe that for any rank $2$ coherent sheaf $\F$ and line bundle $\L$ on any smooth projective $n$-fold $X$, one has $c_3(\F \otimes \L) = c_3(\F)$. This simply follows from $\ch(\F \otimes \L) = \ch(\F) \ch(\L)$ and relies on $\F$ having rank 2. Therefore $c_3(\F^*) = c_3(\F)$. Next, take a locally free resolution $0 \rightarrow \cE_1 \rightarrow \cE_0 \rightarrow \F \rightarrow 0$ of $\F$. The resolution can be taken of length 1 by \cite[Prop.~1.3]{Har} and the Auslander-Buchsbaum formula. Dualizing and taking Chern classes gives
\begin{equation} \label{res}
c_t(\F^*) = c_{-t}(\F) c_t(\ext^1(\F,\O_X)).
\end{equation}
Since $\F$ is locally free outside a codimension $3$ closed subset \cite[Cor.~1.4]{Har}, there are only finitely many points where $\F$ is not locally free. Hence the sheaf $\ext^1(\F,\O_X)$ is 0-dimensional and supported at these points. Consequently $\ch(\ext^1(\F,\O_X) = h^0(\ext^1(\F,\O_X) . pt$. Formula (\ref{res}) gives 
\[
c_3(\F^*) =2h^0(\ext^1(\F,\O_X) -  c_3(\F)
\] 
and part (i) follows from the two expressions for $c_3(\F^*)$. 

The second part can be seen by contradiction. We prove the case $n$ is odd, the even case is similar. Suppose $\{\F_i\}_{i=1}^{\infty}$ is a sequence of rank $r$ $\mu$-stable torsion free sheaves on $X$ with Chern classes $c_1, \ldots, c_{n-1}$ and strictly monotonously increasing $c_{n}(\F_i)$. By passing to a subsequence, we may assume $c_n(\F_i)$ all have the same value modulo $(n-1)!$ and suppose (without loss of generality) this value is $0$. Let $i \in \Z$ be arbitrary. Pick any point $p_1 \in X$ where $\F_i$ is locally free, and any surjection 
\[
\F_i \rightarrow \O_{p_1},
\]
where $\O_{p_1}$ is the structure sheaf of $p_1$. Then the kernel $\cK$ of this surjection is a rank $r$ $\mu$-stable torsion free sheaf with Chern classes $c_1, \ldots, c_{n-1}$ and 
\[
c_n(\cK) = c_n(\F_i) - (-1)^{n+1}(n-1)! = c_n(\F_i) - (n-1)!.
\]
Repeat the argument for $\cK$ and a point $p_2 \in X$ where $\cK$ is locally free. Continuing in this fashion, after $N_i$ steps we produce an element $[\cK_i]$ of the moduli space $\M_{X}^{H}(r,c_1, \ldots, c_{n-1},0)$ of $\mu$-stable torsion free sheaves on $X$ with the indicated Chern classes. To first order, we can freely move each of the points $p_1, \ldots, p_{N_i}$ in $n$ directions, so the Zariski tangent space at $[\cK_i]$ is of dimension $\geq n N_i$. Letting $i \rightarrow \infty$, we conclude that the moduli space $\M_{X}^{H}(r,c_1, \ldots, c_{n-1},0)$ has Zariski tangent spaces of arbitrarily high dimension, a contradiction.
\end{proof}

\begin{remark}
Note that this proposition does \emph{not} imply that generating functions of Euler characteristics of moduli spaces of rank 2 $\mu$-stable \emph{torsion free} sheaves of fixed $c_1, c_2$ on polarized smooth projective 3-folds are polynomial. This cannot be the case as Theorem \ref{thmGKY} already illustrate. 
\end{remark}

\subsection{Hartshorne's inequalities}

The upper bound of Proposition \ref{bounds} is not explicit. For rank 2 $\mu$-stable reflexive sheaves on $X = \PP^3$, R.~Harthorne gives explicit upper bounds as discussed in the introduction (Theorem \ref{Hartshorne}). We use the classification of Proposition \ref{class} to rederive these bounds in the $T$-equivariant case. Moreover, we classify the $T$-equivariant sheaves attaining the upper bound. Note that tensoring with $\O(l)$ induces an isomorphism \cite[Cor.~2.2]{Har}
\[
\N_{\PP^3}(2,c_1,c_2,c_3) \cong \N_{\PP^3}(2,c_1+2l,c_2+c_1 l +l^2,c_3),
\]
so we do not loose any generality by assuming $c_1=-1$ or $0$.
\begin{proposition} \label{equivHartshorne}
Let $\F$ be a $T$-equivariant rank 2 $\mu$-stable reflexive sheaf on $\PP^3$ with Chern classes $c_1, c_2, c_3$ and associated toric data $({\bf{u}},{\bf{v}},{\bf{p}})$. Then
\begin{enumerate}
\item[$\mathrm{(i)}$] $c_3 = c_1 c_2 \ \mathrm{mod} \ 2$,
\item[$\mathrm{(ii)}$] if $c_1=-1$ or $0$, then $c_2 > 0$,
\item[$\mathrm{(iii)}$] if $c_1=-1$, then $0 \leq c_3 \leq c_{2}^{2}$ and if $c_1=0$, then $0 \leq c_3 \leq c_{2}^{2} - c_2 +2$.
\end{enumerate}
If $c_1=-1$, then $c_3(\F) = c_2(\F)^2$ precisely if
\begin{enumerate}
\item[$\mathrm{(a)}$] there are $\{i,j,k,l\} = \{1,2,3,4\}$ such that $p_i = p_j$ and $p_j$, $p_k$, $p_l$ are mutually distinct, $v_i \geq 1$, $v_j \geq 1$, $v_k = 1$, and $v_i + v_j = v_l$ (in this case $\F$ is of type (2)),
\item[$\mathrm{(b)}$] there are $\{i,j,k,l\} = \{1,2,3,4\}$ such that $v_i=0$, $p_j$, $p_k$, $p_l$ are mutually distinct, $v_j = 1$, and $v_k = v_l \geq 1$ (in this case $\F$ is of type (3)).  
\end{enumerate}
If $c_1=0$, then $c_3(\F) = c_2(\F)^2 - c_2(\F) +2$ precisely if
\begin{enumerate}
\item[$\mathrm{(a)}$] $p_1$, $p_2$, $p_3$, $p_4$ are mutually distinct and $v_1 = v_2 = v_3 = v_4 = 1$ (in this case $\F$ is of type (1)),
\item[$\mathrm{(b)}$] there are $\{i,j,k,l\} = \{1,2,3,4\}$ such that $p_i = p_j$ and $p_j$, $p_k$, $p_l$ are mutually distinct, $v_i=v_j=1$, and $v_k = v_l = 2$ (in this case $\F$ is of type (2)),
\item[$\mathrm{(c)}$] there are $\{i,j,k,l\}=\{1,2,3,4\}$ such that $v_i=0$, $p_j$, $p_k$, $p_l$ are mutually distinct, and $v_j = v_k = v_l = 2$ (in this case $\F$ is of type (3)).  
\end{enumerate}
\end{proposition}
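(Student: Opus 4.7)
The plan is a case-by-case analysis over the three types in Proposition \ref{class}. In each type the incidence numbers $p_{ij},\,p_{ijk}$ are determined (type 1: all equal $1$; type 2 with $p_1 = p_2$: only $p_{12} = 0$; type 3 with $v_4 = 0$: the $v_4$-terms vanish), so specialising Proposition \ref{Chern} yields $c_3 = C_Y({\bf{v}})$ together with a type-dependent quadratic formula for $c_2$ in terms of the $v_i$. Part (i) then follows from a mod-$2$ reduction using $c_1 \equiv \sum_i v_i \pmod 2$.

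The unifying device for parts (ii) and (iii) is the passage to \emph{Ravi coordinates} $x = -a+b+c$, $y = a-b+c$, $z = a+b-c$ on the active triangle $(a,b,c)$, which converts the strict triangle inequalities of Proposition \ref{class} into $x,y,z \in \Z_{\geq 1}$ with a parity constraint (all odd for $c_1 = -1$, all even for $c_1 = 0$). The key observation is that types 2 and 3 admit a uniform description: setting $(a,b,c) := (v_1+v_2,\ v_3,\ v_4)$ in type 2 and $(a,b,c) := (v_1,\ v_2,\ v_3)$ in type 3, one obtains $c_3 = abc$ together with
\[
4c_2 - c_1^2 \;=\; 2(ab+bc+ca) - (a^2+b^2+c^2) \;=\; xy + yz + zx
\quad\text{and}\quad 8c_3 = (x+y)(y+z)(z+x).
\]
Part (ii) now reads off as $4c_2 - c_1^2 \geq 3$; part (iii) becomes a polynomial inequality in $(x,y,z)$.

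Writing $x = 1+a',\ y = 1+b',\ z = 1+c'$ with $a',b',c' \geq 0$, a direct expansion produces for $c_1 = -1$ the manifestly non-negative identity
\[
16(c_2^2 - c_3) \;=\; \sigma_2(a',b',c')\bigl(\sigma_2(a',b',c') + 2\sigma_1(a',b',c') + 4\bigr) \,+\, 2\,\sigma_3(a',b',c'),
\]
which vanishes precisely when $\sigma_2(a',b',c') = \sigma_3(a',b',c') = 0$, i.e.~when at most one of $a',b',c'$ is positive, equivalently when at least two of $x,y,z$ equal $1$. Translating back, $a = v_1+v_2 \geq 2$ yields type-$2$ case (a), while $a \geq 1$ with $v_4 = 0$ yields type-$3$ case (b). A closely analogous identity handles the bound $c_3 \leq c_2^2 - c_2 + 2$ for $c_1 = 0$, now with all $x,y,z$ even, forcing equality at $x = y = z = 2$ and thereby producing cases (b) and (c) of the statement.

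Finally, type 1 is treated by the four-variable Ravi substitution $x_i := S - 2v_i$, with all $x_i$ positive, $\leq S-2$, and of the parity of $S$. A four-variable analog of the above SOS identity shows $c_2^2 - c_3 > 0$ strictly when $c_1 = -1$ (explaining the absence of a type-$1$ configuration in that equality statement), while for $c_1 = 0$ the bound becomes tight exactly at $x_1 = x_2 = x_3 = x_4 = 2$, i.e.~all $v_i = 1$, producing case (a). The main obstacle is locating and verifying these SOS-style polynomial identities — especially the four-variable one for type 1, which is less symmetric and genuinely requires the strict type-1 constraints $x_i \leq S-2$ — and then carefully bookkeeping the parity constraint together with the distinction between $V := v_1 + v_2 \geq 2$ (type 2) and $a \geq 1$ with $v_4 = 0$ (type 3), so that every equality configuration in the statement is accounted for exactly once without overlap.
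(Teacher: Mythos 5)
Your strategy is essentially the paper's: part (i) by reduction mod $2$ using $c_1\equiv\sum_i v_i$, reduction of type 2 to the type 3 problem via $a=v_i+v_j$, and the bound in (iii) via a substitution that turns the stability inequalities into positivity constraints, followed by a manifestly non-negative polynomial identity whose vanishing locus gives the equality cases. Your three-variable identity for $c_1=-1$ is correct (with $x=1+a'$ etc.\ one indeed gets $16(c_2^2-c_3)=\sigma_2(\sigma_2+2\sigma_1+4)+2\sigma_3$), and your reading of its zero locus --- at least two of the Ravi coordinates equal to $1$, i.e.\ one of $a,b,c$ equal to $1$ and the remaining two equal --- is exactly what cases (a) and (b) require; this is in fact stated more precisely than the corresponding sentence in the paper's proof, which phrases the equality condition in terms of the untransformed variables. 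The only real difference of route is that the paper shifts the original variables by $1$ (types 2, 3) or by $\tfrac12$ (type 1) rather than passing to Ravi coordinates; both normalizations do the same job.

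The one step you leave open --- the four-variable identity for type 1 --- is exactly what the paper writes down, and it is the crux of (iii) for $c_1=-1$. Setting $\xi_i=v_i-\tfrac12$, the paper records
\[
c_2^2-c_3=\frac{1}{16}\Big(\sum_i\xi_i(\xi_j+\xi_k+\xi_l-\xi_i)\Big)^2+\frac{1}{4}\sum_i\xi_i^2(\xi_j+\xi_k+\xi_l-\xi_i)+\frac{1}{2}\sum_{i<j<k}\xi_i\xi_j\xi_k+\frac{1}{8}\sum_i\xi_i(\xi_j+\xi_k+\xi_l-\xi_i)+\frac{1}{16},
\]
where each sum over $i$ runs over $1\le i\le 4$ and $j,k,l$ are the complementary indices. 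Every summand is non-negative on the type-1 region $0\le\xi_i\le\xi_j+\xi_k+\xi_l$ (the left inequality encodes $v_i\ge 1$, confirming your remark that the strict type-1 constraints are genuinely used), and the trailing $\tfrac{1}{16}$ gives strict positivity, so no type-1 sheaf attains the bound when $c_1=-1$. Finally, be aware that for $c_1=0$ the paper is as terse as you are: it states that after the analogous substitution one must first treat the case all $v_i\ge 2$ and then handle small $v_i$ by separate estimates, so your ``closely analogous identity'' will in practice break into several cases before producing equality exactly at (a), (b), (c).
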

\begin{proof}
We treat the case $c_1=-1$ in detail and indicate how to do the case $c_1=0$ afterwards. First assume $\F$ is of type $(1)$ and denote the corresponding toric data by $({\bf{u}},{\bf{v}},{\bf{p}})$. Let $u:=\sum_i u_i$, then by Proposition \ref{Chern}
\[
2 \ | \ -1 + \sum_i v_i, \ u = -\frac{1}{2}\big(-1 + \sum_i v_i \big).
\]
By the same proposition, the formulae for $c_2$ and $c_3$ are 
\[
c_2 = \frac{1}{4} + \frac{1}{4} \sum_{i} v_i(v_j+v_k+v_l - v_i), \ c_3 = \sum_{i<j<k} v_i v_j v_k.
\]
Here the first sum is over all $1 \leq i \leq 4$ and $j<k<l$ are the remaining three indices among $1,2,3,4$ (this notation is used several times in the proof). Since $2 \ | \ -1 + \sum_i v_i$, either exactly one $v_i$ is even or exactly one $v_i$ is odd. A simple computation modulo 2 shows (i). Since $v_j+v_k+v_l - v_i > 0$ by stability (Proposition \ref{class}), we have $c_2 > 0$, which shows (ii). The lower bound of (iii) is obvious from the formula for $c_3$. Next we prove $c_{2}^{2} - c_3 \geq 0$. In fact, we claim the function $f : \R^4 \rightarrow \R$
\[
f(x_1, x_2, x_3, x_4) = \Bigg(\frac{1}{4} + \frac{1}{4} \sum_i x_i(x_j+x_k+x_l - x_i)\Bigg)^2 - \sum_{i<j<k} x_i x_j x_k
\]
is \emph{positive} on the region
\[
\ccR = \big\{(x_1,x_2,x_3,x_4) \in \R^4 \ | \ \frac{3}{2} \leq x_i + 1 \leq x_j + x_k + x_l \ \forall \{i,j,k,l\}=\{1,2,3,4\}  \big\}.
\]
This becomes clear after the coordinate transformation $\xi_i:=x_i - \frac{1}{2}$. Indeed
\begin{align*}
f(\xi_1,\xi_2,\xi_3,\xi_4) = \ &\frac{1}{16}\Bigg( \sum_i \xi_i (\xi_j+\xi_k+\xi_l-\xi_i) \Bigg)^2 + \frac{1}{4} \sum_i \xi_{i}^{2}(\xi_j +\xi_k+\xi_l-\xi_i)  \\
&+\frac{1}{2} \sum_{i<j<k} \xi_i \xi_j \xi_k + \frac{1}{8} \sum_i \xi_i (\xi_j+\xi_k+\xi_l - \xi_i) + \frac{1}{16}
\end{align*}
is clearly positive on 
\[
\ccR = \big\{ (\xi_1,\xi_2,\xi_3,\xi_4) \in \R^4 \ | \ 0 \leq \xi_i \leq \xi_j + \xi_k + \xi_l \ \forall \{i,j,k,l\}=\{1,2,3,4\}  \big\}.
\]

If $\F$ is of type $(2)$ with $p_i=p_j$, then define $v:=v_i+v_j$. The inequalities for $v,v_k,v_l$ and the formulae for $c_2$ and $c_3$ in terms of $v,v_k,v_l$ are exactly the same as for a type $(3)$ sheaf. Therefore, let $\F$ be of type $(3)$ with $v_i=0$ and assume without loss of generality that $i=4$. Then 
\[
c_2 = \frac{1}{4} + \frac{1}{4} \sum_i v_i(v_j+v_k - v_i), \ c_3 = v_1 v_2 v_3.
\]
Here the sum is over all $1 \leq i \leq 3$ and $j<k$ are the remaining two indices among $1,2,3$. Properties (i), (ii), and the lower bound of (iii) are easily verified as before. For the upper bound of (iii), and parts (a), (b), consider the function $f : \R^3 \rightarrow \R$
\[
f(x_1, x_2,x_3) = \Bigg(\frac{1}{4} + \frac{1}{4} \sum_i x_i(x_j+x_k - x_i)\Bigg)^2 - x_1 x_2 x_3.
\]
Using the coordinate transformation $\xi_i := x_i -1$, it is easy to see that $f$ is non-negative on 
\[
\ccR = \big\{(x_1, x_2, x_3) \in \R^3 \ | \  x_i + 1 \leq x_j + x_k \ \forall \{i,j,k\}=\{1,2,3\}  \big\}.
\]
Moreover, $f$ is zero on $\ccR$ precisely if at least two of $x_1,x_2,x_3$ are equal to $1$. This gives the upper bound of (iii) and parts (a) and (b).

For $c_1=0$, (i), (ii), and  the lower bound of (iii) are analogous. The upper bound of (iii) can be shown using the same substitution but the estimates are harder. First one treats the case all $v_i \geq 2$. The cases where some $v_i < 2$ need separate estimates. Parts (a), (b), and (c) follow from such an analysis.  
\end{proof}

\begin{proof}[Proof of equation (\ref{upperEuler}) in the introduction]
By Proposition \ref{equivHartshorne}, sheaves of type (1) do not contribute. For sheaves of type (2), there are $6 \cdot 2=12$ ways of choosing $p_i = p_j$ and $v_k = 1$. Sheaves of this type satisfy $v_i+v_j = v_l$ and Proposition \ref{Chern} gives 
\[
c_2 = v_i+v_j = v_l.
\] 
Hence, for fixed $c_2$, there are $c_2-1$ choices for the values of $v_i,v_j \geq 1$. We conclude that there are $12(c_2-1)$ sheaves of type (2). For sheaves of type (3), there are $4 \cdot 3=12$ ways of choosing $v_i=0$, $v_j=1$, $v_k = v_l$ when $v_k = v_l >1$ and 4 such choices when $v_k=v_l=1$. The formula of Proposition \ref{Chern} gives
\[
c_2 = v_k = v_l.
\]
Since $v_k, v_l \geq 1$, there are 12 such sheaves when $c_2>1$ and 4 when $c_2=1$.
%Combining Theorem \ref{main} and Proposition \ref{equivHartshorne} immediately gives the desired result. 
\end{proof}

\begin{remark}
For $c_1=-1$ and any $c_2>0$, there exist $T$-equivariant rank 2 $\mu$-stable reflexive sheaves $\F$ such that $c_1(\F)=-1$, $c_2(\F)=c_2$, $c_3(\F) = c_{2}^{2}$ (and all such $\F$ are explicitly described in Proposition \ref{equivHartshorne}). However, for $c_1=0$ and any $c_2>0$, the upper bound $c_3 = c_{2}^2 - c_2 +2$ is only attained by $T$-equivariant rank 2 $\mu$-stable reflexive sheaves for $c_2=2, 3$ (and, again, all such $\F$ are explicitly described in Proposition \ref{equivHartshorne}). This remark follows immediately from Propositions \ref{Chern}, \ref{equivHartshorne}.
\end{remark}

\section{Generating function for torsion free sheaves}

We now turn our attention to the full generating function
\[
\sfZ_{c_1}(p,q) = \sum_{c_2, c_3} e(\M_{\PP^3}(2,c_1,c_2,c_3)) p^{c_2}q^{c_3}.
\]
of Euler characteristics of moduli spaces $\M_{\PP^3}(2,c_1,c_2,c_3)$ of rank 2 $\mu$-stable \emph{torsion free} sheaves on $\PP^3$ with Chern classes $c_1$, $c_2$, $c_3$. This time, we also sum over the second Chern class. This allows us to ``compute'' this generating function by stratifying over types of reflexive hulls. More precisely, we express this generating function in terms of generating functions of Euler characteristics of Quot schemes of certain $T$-equivariant reflexive sheaves and the generating function of Theorem \ref{main}.

Let $\mathrm{Y}=(1), (2), (3)$ be one of the types of sheaves in the classification of Proposition \ref{class}. Let $\F_\mathrm{Y}({\bf{v}},\bf{p})$ be a $T$-equivariant rank 2 $\mu$-stable reflexive sheaf on $\PP^3$ of type $\mathrm{Y}$ described by toric data $({\bf{0}},{\bf{v}},{\bf{p}})$. I.e.~we take all \emph{all} $u_i=0$. For any $c_2$, $c_3$, consider the Quot scheme
\[
\Quot(\F_\mathrm{Y}({\bf{v}},{\bf{p}}),c_2,c_3)
\]
parametrizing quotients $\F_\mathrm{Y}({\bf{v}}) \twoheadrightarrow \cQ$, where $\cQ$ has dimension $\leq 1$ and Chern classes $c_2$, $c_3$. Note that $\cQ$ is 1-dimensional if and only if $c_2 < 0$ and 0-dimensional if and only if $c_2=0$. Moreover, if $c_2=0$, then $c_3 \geq 0$. We consider the generating function
\[
\mathsf{Q}_{\mathrm{Y}, {\bf{v}}}(p,q):= \sum_{c_2} \sum_{c_3} e(\Quot(\F_\mathrm{Y}({\bf{v}}, {\bf{p}}),c_2,c_3)) p^{-c_2} q^{-c_3 + c_2 \sum_i v_i}.
\]
The choice of signs and shift in the powers of the formal variables $p$ and $q$ is motivated by the formula of Proposition \ref{torsionfree} below.

\begin{remark} \label{indep}
Since $\F_\mathrm{Y}({\bf{v}}, {\bf{p}})$ is $T$-equivariant, there is a natural action of $T$ on the Quot scheme $\Quot(\F_\mathrm{Y}({\bf{v}}, {\bf{p}}),c_2,c_3)$. In \cite{GKY}, we give a combinatorial description\footnote{There are some similarities with the description of fixed point loci of moduli spaces of stable pairs on toric 3-folds. Their components are also products of $\PP^1$'s \cite{PT2}.} of the fixed point locus $\Quot(\F_\mathrm{Y}({\bf{v}}, {\bf{p}}),c_2,c_3)^T$ and show its components are products of $\PP^1$'s. The combinatorial description is in terms of certain triples of 3D partitions. Although the sheaf $\F_\mathrm{Y}({\bf{v}}, {\bf{p}})$ depends on ${\bf{p}}$, it is easy to see that $e(\Quot(\F_\mathrm{Y}({\bf{v}}, {\bf{p}}),c_2,c_3))$ does \emph{not} depend on ${\bf{p}}$. 
\end{remark}

Using Theorem \ref{main} we derive the following structure formula for the generating function $\sfZ_{c_1}(p,q)$. 
\begin{proposition} \label{torsionfree}
For any  $c_1$, we have 
\begin{align*}
\sfZ_{c_1}(p,q) = &-\sum_{{\bf{v}} \in D_1(c_1)} \mathsf{Q}_{1,{\bf{v}}}(p,q) p^{\frac{c_{1}^{2}}{4}+B_1({\bf{v}})} q^{C_{1}({\bf{v}})} \\
&+ \sum_{{\bf{v}} \in D_2(c_1)} 6 \mathsf{Q}_{2,{\bf{v}}}(p,q) p^{\frac{c_{1}^{2}}{4}+B_2({\bf{v}})} q^{C_{2}({\bf{v}})} \\
&+ \sum_{{\bf{v}} \in D_3(c_1)} 4 \mathsf{Q}_{3,{\bf{v}}}(p,q) p^{\frac{c_{1}^{2}}{4}+B_3({\bf{v}})} q^{C_{3}({\bf{v}})}.
\end{align*}
Here $D_i(c_1) := \bigcup_{c_2} D_i(c_1,c_2)$ and $D_i(c_1,c_2) \subset \Z^4$, $C_{i}({\bf{v}})$ are as in Theorem \ref{main}. Moreover, $B_{i}({\bf{v}})$ are the following quadratic forms
\begin{align*}
B_1({\bf{v}}) &= \frac{1}{2} \sum_{i<j} v_i v_j - \frac{1}{4} \sum_i v_{i}^{2}, \\ 
B_2({\bf{v}}) &= -v_1 v_2 + \frac{1}{2} \sum_{i<j} v_i v_j - \frac{1}{4} \sum_i v_{i}^{2}, \\ 
B_3({\bf{v}}) &= \frac{1}{2} \sum_{i<j} v_i v_j - \frac{1}{4} \sum_i v_{i}^{2}.
\end{align*}
\end{proposition}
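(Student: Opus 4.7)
The plan is to apply torus localization to $\M_{\PP^3}(2,c_1,c_2,c_3)$ and then stratify its $T$-fixed locus according to the $T$-equivariant isomorphism class of the reflexive hull. First, by \cite[Prop.~4.1]{Koo} the $T$-action lifts to the moduli space and $e(\M_{\PP^3}(2,c_1,c_2,c_3)) = e(\M_{\PP^3}(2,c_1,c_2,c_3)^T)$. Each $T$-fixed torsion free sheaf $[\F]$ admits a $T$-equivariant structure (unique up to character), and its inclusion into the reflexive hull produces a $T$-equivariant short exact sequence
\[
0 \to \F \to \F^{**} \to \cQ \to 0,
\]
with $\dim \cQ \leq 1$. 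Since $\F$ and $\F^{**}$ have the same rank and $c_1$, and since any saturated $\G \subset \F^{**}$ satisfies $c_1(\G \cap \F) = c_1(\G)$ (because $\G/(\G \cap \F) \hookrightarrow \cQ$ has $c_1 = 0$), $\mu$-stability of $\F$ is equivalent to that of $\F^{**}$.

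Next, I would stratify the $T$-fixed locus by the $T$-equivariant isomorphism class of $\F^{**}$, normalized via the slice $u_1 = u_2 = u_3 = 0$ used in the proof of Theorem \ref{main}. By Proposition \ref{class} this yields three kinds of strata: type 1 components, each isomorphic to $\C^* \setminus \{1\}$ (Euler characteristic $-1$); type 2 isolated reduced points (with combinatorial multiplicity $6$ from the choice of which pair $p_i = p_j$); and type 3 isolated reduced points (with multiplicity $4$ from the choice of which $v_i = 0$). Over each $[\F^{**}]$ the fibre is $\Quot(\F^{**}, c_2(\cQ), c_3(\cQ))^T$, and Remark \ref{indep} guarantees that its Euler characteristic is constant along the slice component, so Euler characteristics multiply. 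Applying Whitney to the sequence and using $c_1(\cQ) = 0$ gives $c_2(\F) = c_2(\F^{**}) - c_2(\cQ)$ and $c_3(\F) = c_3(\F^{**}) - c_1 c_2(\cQ) - c_3(\cQ)$; Proposition \ref{Chern} then specializes to $c_2(\F^{**}) = c_1^2/4 + B_Y({\bf{v}})$ and $c_3(\F^{**}) = C_Y({\bf{v}})$ after substituting the $p_{ij},p_{ijk}$ configurations of type $Y$ (type $1$: all flags distinct, $p_{ij}=p_{ijk}=1$; type $2$: one coincidence flips $+v_1 v_2 \mapsto -v_1 v_2$; type $3$: all terms involving index $4$ vanish).

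Finally, I would translate quotients of $\F^{**}$ into quotients of $\F_Y({\bf{v}},{\bf{p}})$. The slice element equals $\F^{**} = \F_Y({\bf{v}},{\bf{p}}) \otimes \L_{{\bf{u}}}$ with ${\bf{u}} = (0,0,0,u_4)$ and $u_4 = -\tfrac{1}{2}(c_1 + \sum_i v_i)$. Tensoring by $\L_{{\bf{u}}}^{-1}$ bijects Quot schemes: it preserves $c_2$ of the quotient and sends $c_3$ of the $\F^{**}$-quotient to $c_3 - 2 u_4 c_2$ for the $\F_Y({\bf{v}},{\bf{p}})$-quotient (from $\ch_3(\cQ \otimes \L) = \ch_3(\cQ) - c_2(\cQ) c_1(\L)$ when $c_1(\cQ) = 0$). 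After this substitution the $q$-exponent in the stratum contribution becomes
\[
-c_3^{\mathrm{Quot}} - (c_1 + 2 u_4)\, c_2^{\mathrm{Quot}} = -c_3^{\mathrm{Quot}} + \Big(\sum_i v_i\Big) c_2^{\mathrm{Quot}},
\]
which is precisely the shift built into the definition of $Q_{Y,{\bf{v}}}(p,q)$. Using $e(\Quot) = e(\Quot^T)$ by localization on the Quot scheme, assembling the strata and multiplying by the type-dependent combinatorial factors ($-1$ for type $1$, $6$ for type $2$, $4$ for type $3$) produces the claimed formula.

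The main obstacle is the Euler characteristic multiplicativity in the type 1 case, where the stratum fibres non-trivially over the $1$-dimensional base $\C^* \setminus \{1\}$. This requires treating the Quot scheme as a relative $T$-equivariant construction over the slice, combined with Remark \ref{indep} (whose combinatorial proof is deferred to \cite{GKY}) to ensure that the fibre Euler characteristic is constant in the moduli parameter ${\bf{p}}$.
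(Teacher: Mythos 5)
Your proposal is correct and follows essentially the same route as the paper: localize, stratify the fixed locus via the double-dual map to the slice-normalized reflexive hulls classified in Proposition \ref{class}, identify the fibres with $T$-fixed Quot schemes, and invoke Remark \ref{indep} to multiply Euler characteristics over the type 1 components. Your explicit verification of the exponent shift $-c_3 + c_2\sum_i v_i$ after tensoring by $\L_{(0,0,0,-u_4)}$, and the check that stability is preserved under taking reflexive hulls, are details the paper leaves implicit, but they do not change the argument.
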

\begin{proof}
We first observe that summing the generating function of Theorem \ref{main} over $c_2$ and using the formula for $c_2$ of Proposition \ref{Chern}, immediately gives
\begin{align*}
\sfZ_{c_1}^{\mathrm{refl}}(p,q) = &-\sum_{{\bf{v}} \in D_1(c_1)} p^{\frac{c_{1}^{2}}{4}+B_1({\bf{v}})} q^{C_{1}({\bf{v}})} + \sum_{{\bf{v}} \in D_2(c_1)} 6 p^{\frac{c_{1}^{2}}{4}+B_2({\bf{v}})} q^{C_{2}({\bf{v}})} \\
&+ \sum_{{\bf{v}} \in D_3(c_1)} 4 p^{\frac{c_{1}^{2}}{4}+B_3({\bf{v}})} q^{C_{3}({\bf{v}})}.
\end{align*}
Next, we fix $c_1, c_2, c_3$ and consider the double dual map\footnote{The double dual map is \emph{not} a morphism. This is because reflexive hulls of the fibres of a flat family need not form a flat family. However, using a result of J.~Koll\'ar \cite{Kol}, the domain can be written as a disjoint union of locally closed subschemes $S_i$, such that on each $S_i$ the double dual map is a morphism. This is enough for our purposes since we only consider Euler characteristics.}
\[
(\cdot)^{**} : \M_{\PP^3}(2,c_1,c_2,c_3) \longrightarrow \coprod_{c'_2, c'_3} \N_{\PP^3}(2,c_1,c'_2,c'_3).
\]
The fibre over $[\F] \in \N_{\PP^3}(2,c_1,c'_2,c'_3)$ equals (at the level of reduced schemes) the Quot scheme%\footnote{Strictly speaking, we do not show the \emph{scheme theoretic} fibres are Quot schemes. However, since we are only interested in Euler characteristics, this is not needed.} 
\[
\Quot(\F,c''_2,c''_3),
\]
where
\[
c'_2 = c_2 + c''_2, \ c'_3 = c_3 + c''_3 + c_1 c''_2.  
\]
Next, we consider the double dual map at the level of fixed point loci
\[
(\cdot)^{**} : \M_{\PP^3}(2,c_1,c_2,c_3)^T \longrightarrow \coprod_{c'_2, c'_3} \N_{\PP^3}(2,c_1,c'_2,c'_3)^T.
\]
Take a closed point $[\F] \in \N_{\PP^3}(2,c_1,c'_2,c'_3)^T$. By the proof of Theorem \ref{main}, we can take $\F$ to be $T$-equivariant and described by toric data $({\bf{u}},{\bf{v}},{\bf{p}})$ with $u_1=u_2=u_3=0$. As we have seen in the proof of Theorem \ref{main}, such a choice of $T$-equivariant structure is unique. Since $\F$ has first Chern class $c_1$, we have
\[
u:=\sum_i u_i = u_4 = -\frac{1}{2} \big(c_1 + \sum_i v_i \big)
\]
by Proposition \ref{Chern}. The sheaf $\F$ is of type $\mathrm{Y}$ for some $\mathrm{Y}=(1), (2), (3)$ by Proposition \ref{class}. Define
\[
\F_\mathrm{Y}({\bf{v}},{\bf{p}}):= \F \otimes \L_{(0,0,0,-u)}.
\]
The line bundles $\L_{(u_1, u_2, u_3, u_4)}$ and the effect of tensoring a $T$-equivariant sheaf with such a line bundle were described in Section 3.1.

In the proof of Theorem \ref{main}, we saw that sheaves of type $(2), (3)$ correspond to isolated reduced points and sheaves of type $(1)$ occur in a connected component isomorphic to $\C^* \setminus \{1\}$. Even though sheaves of type $(1)$ are not isolated, the fixed locus $e(\Quot(\F,c''_2,c''_3))$ is independent of $[\F]  \in \C^* \setminus \{1\}$ by Remark \ref{indep}. Therefore, for each connected component $C \subset \N_{\PP^3}(2,c_1,c'_2,c'_3)^T$, we define $e(\Quot(C,c''_2,c''_3)):=e(\Quot(\F,c''_2,c''_3))$ for any $[\F] \in C$. We conclude
\[
e(\M_{\PP^3}(2,c_1,c_2,c_3)) = \sum_{{\scriptsize{\begin{array}{c} c'_2 = c_2 + c''_2 \\ c'_3 = c_3 + c''_3 + c_1 c''_2 \end{array}}}} \!\!\!\!\! \sum_{{\scriptsize{\begin{array}{c} C \subset \N_{\PP^3}(2,c_1,c'_2,c'_3)^T \\ \mathrm{conn. \ comp.} \end{array}}}} \!\!\!\!\! e(C) e(\Quot(C,c''_2,c''_3)).
\]
The result follows from Theorem \ref{main} and the definitions.
\end{proof}

\begin{remark}
Write $\mathsf{Q}_{\mathrm{Y}, {\bf{v}}}(p,q) = \mathsf{Q}_{\mathrm{Y}, {\bf{v}},0}(q)p^0 + \cdots$. The description of fixed point loci in \cite{GKY} can be used to compute a closed expression for $\mathsf{Q}_{\mathrm{Y}, {\bf{v}},0}(q)$. Proposition \ref{torsionfree} and \cite[Cor.~4.10]{GKY} give Theorem \ref{thmGKY} of the introduction.  
\end{remark}

\section{Other toric 3-folds and wall-crossing}

\noindent \emph{Extension to arbitrary toric 3-folds.} Many of the techniques of this paper readily extend to \emph{any} smooth projective toric 3-fold $X$ with polarization $H$. The fan $\Delta$ defining $X$ determines the categories of $T$-equivariant torsion free and reflexive sheaves on $X$ (Theorem \ref{Kly} and Section 2). The $T$-equivariant rank 2 reflexive sheaves on $X$ are described by toric data $({\bf{u}}, {\bf{v}}, {\bf{p}}) = \{(u_i,v_i,p_i)\}_{i=1, \ldots, l}$, where $l$ is the number of rays of $\Delta$. After computing $H^{2*}(X,\Z)$ explicitly \cite[Sect.~5.2]{Ful}, one can find an expression for the Chern classes $c_i(\F)$ of any $T$-equivariant rank 2 reflexive sheaf $\F$ on $X$ as in Proposition \ref{Chern}. The proof goes exactly the same except that the cohomology ring is different. Since the notion of $\mu$-stability for $T$-equivariant reflexive sheaves is worked out explicitly for any rank and on any toric variety in \cite[Prop.~3.20, 4.13]{Koo}, it is not hard to derive the analog of the classification of Proposition \ref{class} on any given $X$. For example, a type $(1)$ sheaf is described by toric data $({\bf{u}},{\bf{v}},{\bf{p}})$, where $p_1,\ldots, p_l$ are $l$ mutually distinct points on $\PP^1$ and the $v_i$ satisfy
\[
(H^2 \cdot D_i) \ v_i < \sum_{{\scriptsize{\begin{array}{c} j=1 \\ j \neq i \end{array}}}}^{l} (H^2 \cdot D_j) \ v_j, \ \forall i=1, \ldots, l,
\]
where $D_1, \ldots, D_l$ are the toric divisors. Besides type $(1)$, many types of degenerations are possible corresponding to $p_i$'s coming together or $v_i$'s becoming zero like in Proposition \ref{class}. The formula for the Chern character together with the classification gives a generating function
\[
\sfZ_{X,H,c_1,c_2}^{\mathrm{refl}}(q) = \sum_{c_3} e(\N_{X}^{H}(2,c_1,c_2,c_3)) q^{c_3},
\]
as in Theorem \ref{main}. By Proposition \ref{bounds}, this expression is a polynomial but finding sharp bounds like in Proposition \ref{equivHartshorne} seems a hard problem in general. \\

\noindent \emph{Wall-crossing.} It is interesting to study the dependence of $\sfZ_{X,H,c_1,c_2}^{\mathrm{refl}}(q)$ on the choice of polarization $H$. This leads to wall-crossing phenomena as we now illustrate for the case $X = \PP^2 \times \PP^1$. As a toric variety, $\PP^2 \times \PP^1$ is described by the lattice $N = \Z^3$ and the fan $\Delta$ consisting of 3-dimensional cones $\sigma_1 = \langle e_1, e_2, e_3 \rangle_{\Z_{\geq 0}}$, $\sigma_2 = \langle e_2, -e_1-e_2, e_3 \rangle_{\Z_{\geq 0}}$, $\sigma_3 = \langle -e_1 - e_2, e_1, e_3 \rangle_{\Z_{\geq 0}}$, $\sigma_4 = \langle e_1, e_2, -e_3 \rangle_{\Z_{\geq 0}}$, $\sigma_5 = \langle e_2, -e_1-e_2, -e_3 \rangle_{\Z_{\geq 0}}$, and $\sigma_6 = \langle -e_1 - e_2, e_1, -e_3 \rangle_{\Z_{\geq 0}}$. Here $(e_1,e_2,e_3)$ is the standard basis of $\Z^3$. We denote the rays corresponding to $e_1$, $e_2$, $-e_1-e_2$ by $\rho_1$, $\rho_2$, $\rho_3$ and the rays corresponding to $e_3$, $-e_3$ by $\rho_{1}^{\prime}$, $\rho_{2}^{\prime}$. The cohomology ring $H^{2*}(\PP^2 \times \PP^1,\Z)$ is the $\Z$-algebra generated by two elements $h$, $h'$ modulo the relations
\[
h^{\prime 2} = h^{3} = 0.
\]
In particular, the degree two part is generated by $l:=h^2$ and $l' := h h'$. Moreover, $pt := h^2 h'$ is (Poincar\'e dual to) the class of a point. Like in the case of $\PP^3$, we can describe a $T$-equivariant rank 2 reflexive sheaf on $\PP^2 \times \PP^1$ by toric data (cf.~Definition \ref{toricdata}). To the rays $\rho_i$ we associate $u_i, v_i, p_i$ and to the rays $\rho_{i}^{\prime }$ we associate $u_{i'}, v_{i'}, p_{i'}$, so a $T$-equivariant rank 2 reflexive sheaf $\F$ on $\PP^2 \times \PP^1$ is described by toric data 
\[
\{(u_i,v_i,p_i), (u_{i'}, v_{i'}, p_{i'})\}_{i=1,2,3, i'=1,2}.
\]
The Chern classes of such a sheaf are given by
\begin{align*}
c_1(\F) = &- \big(2u+\sum_i v_i \big) h - \big( 2u'+\sum_{i'} v_{i'} \big) h', \\
c_2(\F) = &\frac{1}{4} c_1(\F)^2 + \Big( \frac{1}{2} \sum_{i<j} (2p_{ij} - 1) v_i v_j  - \frac{1}{4} \sum_i v_{i}^{2} \Big) l \\
&+ \Big( \frac{1}{2} \sum_{i} \sum_{i'} (2p_{i i'} - 1) v_i v_{i'} \Big) l, \\
c_3(\F) = &\sum_{i<j} \sum_{i'} v_i v_j v_{i'} (p_{ij}+p_{ii'}+p_{ji'} - 2p_{iji'}) pt,
\end{align*}
where 
\begin{align*}
&u :=\sum_i u_i, \ u':=\sum_{i'} u_{i'}, \\
&p_{ij} :=1-\mathrm{dim}(p_i \cap p_j), \ p_{ii'}:=1-\mathrm{dim}(p_i \cap p_{i'}), \ p_{iji'} :=1-\mathrm{dim}(p_i \cap p_j \cap p_{i'}).
\end{align*}
Analyzing $\mu$-stability as described in the previous paragraph gives rise to a classification of $T$-equivariant rank 2 $\mu$-stable reflexive sheaves on $\PP^2 \times \PP^1$. 
%Note: in general, classification according to type is not the same as classification according to characteristic function. The two are the same for $X = \PP^3$. The former is finer and works equally well when just dealing with Euler characteristic.
Type $(1)$ sheaves are parametrized by five distinct points on $\PP^1$ and there are 6 degenerations corresponding to how the various points can come together or disappear much like in the case of $\PP^3$. It is not hard to write down an expression for $\sfZ_{X,H,c_1,c_2}^{\mathrm{refl}}(q)$, but the general formula is not very enlightening. 

We denote the polarization by $H = \alpha h + \alpha' h'$, where $\alpha, \alpha' \in \Z_{>0}$. The notion of $\mu$-stability only depends on the ratio $\tau:=\frac{\alpha}{2 \alpha'} > 0$.
%So without loss of generality we can take $\alpha, \alpha' \in \Z_{>0}$ such that $\gcd(\alpha, \alpha')=1$. 
We fix $c_1 = a h + a' h'$. Without loss of generality, we take $a,a' \in \{0,1\}$. Rank and degree are coprime if and only if $\alpha a'$ is odd. If this is the case, Gieseker and $\mu$-stability coincide and there are no strictly semistables \cite[Lem.~1.2.13, 1.2.14]{HL}. Since we are mostly interested in this case, we only consider $c_1 = h'$ and $c_1 = h+h'$. 
%Are the numbers $\tau:=\frac{\alpha}{2 \alpha'} > 0$ with $\alpha, \alpha' \in \Z_{>0}$ and $\alpha$ odd dense in $\Q$? Yes!

For $c_1 = h'$ and $c_2 = l'$, we computed $\sfZ_{X,H,c_1,c_2}^{\mathrm{refl}}(q)$ \emph{numerically}\footnote{For each term in the exact expression of $\sfZ_{X,H,c_1,c_2}^{\mathrm{refl}}(q)$, we let $v_i$, $v_{i'}$ run from 0 to 10.} for many values of $\tau$. The experiments suggest the following chamber structure
\begin{displaymath}
\xy
(0,0)*{} ; (100,0)*{} **\dir{-} ;
(0,5)*{} ; (0,0)*{} **\dir{-} ;
(50,5)*{} ; (50,0)*{} **\dir{-} ;
(97.5,2.5)*{} ; (100,0)*{} **\dir{-} ; 
(97.5,-2.5)*{} ; (100,0)*{} **\dir{-} ;
(0,-5)*{0} ; (50,-5)*{2} ;
(25,5)*{6q} ; (75,5)*{0} ; (105,0)*{\tau}
\endxy
\end{displaymath}
Repeating the experiment for $c_1 = h+h'$ and $c_2 = 2l'$ suggests the following chamber structure for $\sfZ_{X,H,c_1,c_2}^{\mathrm{refl}}(q)$
\begin{displaymath}
\xy
(0,0)*{} ; (100,0)*{} **\dir{-} ;
(0,5)*{} ; (0,0)*{} **\dir{-} ;
(25,5)*{} ; (25,0)*{} **\dir{-} ;
(50,5)*{} ; (50,0)*{} **\dir{-} ;
(75,5)*{} ; (75,0)*{} **\dir{-} ;
(97.5,2.5)*{} ; (100,0)*{} **\dir{-} ; 
(97.5,-2.5)*{} ; (100,0)*{} **\dir{-} ;
(0,-5)*{0} ; (25,-5)*{\frac{1}{3}} ; (50,-5)*{1} ; (75,-5)*{3} ;
(12.5,5)*{0} ; (37.5,5)*{6q^0} ; (62.5,5)*{18q^2} ; (87.5,5)*{0} ; (105,0)*{\tau}
\endxy
\end{displaymath}
Here the term $6q^0$ comes from six $T$-equivariant rank 2 $\mu$-stable \emph{locally free} sheaves on $\PP^2 \times \PP^1$ corresponding to isolated fixed points of the moduli space. These sheaves are locally free because their third Chern class is zero (Proposition \ref{bounds}). It should be stressed that these chamber structures follow from extensive Maple experiments, but are not proved. The proof would involve handling a cumbersome system of explicit polynomial equalities and inequalities depending on $\tau$.

\noindent {\tt{amingh@math.umd.edu, \tt{m.kool1@uu.nl}
\end{document}